\documentclass[11pt]{amsart}
  
\usepackage{graphicx}
\graphicspath{ {images/} }
\usepackage{amssymb,verbatim,amscd,amsfonts,amsmath,amsthm,enumerate}
\usepackage[all]{xy}
\usepackage{subfig}
\usepackage{tikz}
\usetikzlibrary{shapes,arrows,shadows}
\usetikzlibrary{decorations.markings}
\usepackage{color}
\usepackage[normalem]{ulem}

\usepackage{setspace}

\newtheorem*{namedtheorem}{\theoremname}
\newcommand{\theoremname}{testing}
\newenvironment{named}[1]{\renewcommand{\theoremname}{#1}\begin{namedtheorem}}{\end{namedtheorem}}

\usepackage{hyperref}
\usepackage{wrapfig}
\usetikzlibrary{arrows}
\usepackage{pinlabel}

\long\def\symbolfootnote[#1]#2{\begingroup%
\def\thefootnote{\fnsymbol{footnote}}\footnote[#1]{#2}\endgroup}

\newtheorem{theorem}{Theorem}[section]
\newtheorem*{NoNumberTheorem}{Theorem}
\newtheorem{proposition}[theorem]{Proposition}
\newtheorem{corollary}[theorem]{Corollary}
\newtheorem{lemma}[theorem]{Lemma}

\newtheorem{question}[theorem]{Question}

\theoremstyle{definition}

\newtheorem{remark}[theorem]{Remark}

\newtheorem{definition}[theorem]{Definition}

\newtheorem{example}[theorem]{Example}


\newcommand{\R}{\mathbb{R}}

\newcommand{\N}{\mathbb{N}}
\newcommand{\Z}{\mathbb{Z}}

\newcommand{\Cc}{\mathcal{C}}
\newcommand{\Bc}{\mathcal{B}}

\newcommand{\Bh}{\mathcal{H}}

\newcommand{\Homeo}{\rm Homeo}

\newcommand{\Pc}{\mathcal{P}}


\def\Aut{\operatorname{Aut}}
\def\Out{\operatorname{Out}}
\def\Mod{\operatorname{Mod}}
\def\Homeo{\operatorname{Homeo}}

\def\PMod{\operatorname{PMod}}
\def\Stab{\operatorname{Stab}}

\def\GL{\operatorname{GL}}
\def\inte{\operatorname{int}}

\begin{document}
\title[Asymptotic mapping class groups]{Asymptotic mapping class groups of closed surfaces punctured along Cantor sets}

\author{Javier Aramayona}
\address{Universidad Aut\'onoma de Madrid \& ICMAT \\ C. U. de Cantoblanco. 28049, Madrid, Spain}
\email{aramayona@gmail.com}
 \author{Louis Funar} 
 \address{ Institut Fourier, UMR 5582, Laboratoire de Math\'ematiques \\
Universit\'e Grenoble Alpes, CS 40700, 38058 Grenoble cedex 9, France}
\email{louis.funar@univ-grenoble-alpes.fr}

\date{\today}
\thanks{The first author was partially funded by grants RYC-2013-13008 and MTM2015-67781. He also acknowledges support from U.S. National Science Foundation grants DMS
1107452, 1107263, 1107367 "RNMS: Geometric Structures and Representation Varieties" (the
GEAR Network). This project has received funding from the European Union’s Horizon 2020 research and innovation program under the Marie Skłodowska-Curie grant agreement No 777822.}

%
%

\begin{abstract}
We introduce subgroups $\Bc_g< \Bh_g$ of the mapping class group $\Mod(\Sigma_g)$ of a closed surface of genus $g \ge 0$ with a Cantor set removed, which are extensions of Thompson's group $V$ by a direct limit of mapping class groups of compact surfaces of genus $g$. 

We first show that both $\Bc_g$ and $\Bh_g$  are finitely presented, and that $\Bh_g$ is dense in $\Mod(\Sigma_g)$. 
We  then exploit the relation with Thompson's groups to study properties $\Bc_g$ and $\Bh_g$  in analogy with known facts about finite-type mapping class groups. For instance, their  homology coincides with the stable  homology of the mapping class group of genus $g$, every automorphism is geometric, and every homomorphism from a higher-rank lattice has finite image.

In addition, the same connection with Thompson's groups will also prove that $\Bc_g$ and $\Bh_g$ are  not linear and do not have Kazhdan's Property (T), which represents a  departure from the current knowledge about finite-type mapping class groups.

%
%
%
%
%
\end{abstract}

\maketitle

\section{Introduction}  
There has been a recent surge of activity around mapping class groups of infinite-type surfaces, namely those whose fundamental group is not finitely generated. These groups share many properties with their finite-type counterparts (e.g. \cite{BDR,HMV}), but also show rather different behaviour (e.g. \cite{APV,PV}). 

Here we will focus our attention on the mapping class group $\Mod(\Sigma_g)$ of the surface $\Sigma_g$, namely the closed orientable surface of genus $g\ge 0$ with a Cantor set $C$ removed. This group is related to the homeomorphism group $\Homeo(C)$ through the short exact sequence (see Section \ref{sec:V} for details):
\begin{equation}
1 \to \PMod(\Sigma_g) \to \Mod(\Sigma_g) \to \Homeo(C) \to 1,
\label{eq:SESgen}
\end{equation}
where $\PMod(\Sigma_g)$ is the {\em pure} mapping class group, namely the subgroup of $\Mod(\Sigma_g)$ whose elements fix $C$ pointwise. 

 In this article we study two countable subgroups $\Bc_g < \Bh_g < \Mod(\Sigma_g)$, whose elements {\em asymptotically preserve a rigid structure} on $\Sigma_g$. We remark that $\Bc_0$ and $\Bh_0$ were previously introduced in \cite{FK,FN} under the names $\Bc$ and $\Bc^{1/2}$, respectively. We now give a brief description of these groups, referring the reader to Section \ref{sec:defs} for a complete definition.  
 
 
 We first need to introduce the notion of a {\em rigid structure}; for this purpose, it will be convenient to start with the genus-zero case.  The reader should keep  Figure \ref{fig:genus0} in mind. 
  A rigid structure on $\Sigma_0$ is a triple $(P,\mathcal A, \Sigma_0^+)$, where:
  \begin{itemize}
  \item $P$ is a pants decomposition of $\Sigma_0$,
  \item $\mathcal A$ is a set of pairwise-disjoint properly embedded arcs such that   (the closure of) every connected component of $S- P$, referred to as a {\em pair of pants},  is intersected by exactly three arcs in $\mathcal{A}$, and
  \item $\Sigma_0^+$ is a choice of one of the two connected components of $\Sigma_0 - \bigcup_{a\in \mathcal{A}} a$.
\end{itemize}

\begin{figure}[htb]
\begin{center}
\includegraphics[width=3in,height=2in]{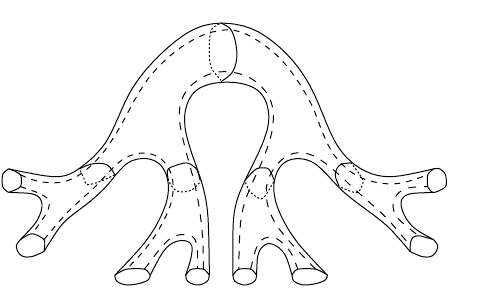} \caption{The rigid structure on $\Sigma_0$.} \label{fig:genus0}
\end{center}
\end{figure}

Let $\Sigma_0^1$ be the result of puncturing $\Sigma_0$ once. A rigid structure on $\Sigma_0^1$ is a rigid structure on the surface (homeomorphic to $\Sigma_0$)  obtained by filling in the isolated puncture of $\Sigma_0^1$. 
 
 A rigid structure on $\Sigma_g$ ($g \ge 1$) consists of a simple closed curve $\alpha$ which cuts off a once-puncture surface of genus $g$, together with a rigid structure for the component of $\Sigma_g - \alpha$ homeomorphic to $\Sigma_0^1$. 
 
 \smallskip

\noindent{\bf The group $\Bc_g$.} Fix, once and for all, a rigid structure on $\Sigma_g$. A homeomorphism $f:\Sigma_g \to \Sigma_g$ is {\em asymptotically rigid} if it preserves  the rigid structure outside some compact subsurface $Z(f)$ of $\Sigma_g$; see Section \ref{sec:rigid} for a complete definition. We define  $\Bc_g$ as the subgroup of $\Mod(\Sigma_g)$ whose elements have at least one asymptotically-rigid representative. Obviously, every element of $\Mod(\Sigma_g)$ with compact support belongs to $\Bc_g$, although the converse is not true. Indeed, denote by $\PMod_c(\Sigma_g)$ the subgroup of $\PMod(\Sigma_g)$ generated by compactly-supported elements, which is a direct limit of mapping class groups of compact genus-$g$ subsurfaces of $\Sigma_g$.  In Proposition \ref{prop:sesB} we will generalize a result of \cite{FK} to prove that the sequence \eqref{eq:SESgen}, when restricted to $\Bc_g$, reads:
\begin{equation}
1 \to \PMod_c(\Sigma_g) \to \Bc_g \to V \to 1,
\end{equation} 
where the rightmost non-trivial group is {\em Thompson's group} $V$ (see e.g. \cite{CFP}).  This sequence reveals a fascinating connection between mapping class groups and Thompson's groups, and will be a key ingredient in the proofs of most of our results.

\begin{remark}
We remark that the notation $\PMod_c(\Sigma)$ is somewhat redundant, for if an element of $\Mod(\Sigma)$ has compact support then it automatically belongs to $\PMod(\Sigma)$. However, we will use the notation $\PMod_c(\Sigma)$ to emphasize the connection with the sequence \eqref{eq:SESgen}
\end{remark}

\smallskip

\noindent{\bf The group $\Bh_g$.} The group $\Bh_g$ is defined in a similar fashion: its elements are those mapping classes which have a representative which preserve $(P,\mathcal A)$ outside some compact subsurface. Observe that $\Bc_g < \Bh_g$, although the inclusion is strict: for instance, a {\em half-twist} lies in $\Bh_g \setminus \Bc_g$. For this reason, the group $\Bh_g$ is sometimes referred to as the {\em group of half-twists} \cite{FN}. Using the same reasoning as above, equation \eqref{eq:SESgen} restricts to a short exact sequence 
\begin{equation}
1 \to \PMod_c(\Sigma_g) \to \Bh_g \to V_2[\mathbb{Z}_2] \to 1,
\end{equation}
where $V_2[\mathbb{Z}_2]$ is the {\em Higman-Thompson group} $V_2[\mathbb{Z}_2]$ (see \cite{BDJ} and Section \ref{sec:V}). A surprising result of Bleak-Donoven-Jonu\v{s}as \cite{BDJ} asserts that $V$ and $V_2[\mathbb{Z}_2]$ are conjugate as subgroups of $\Homeo(C)$ through an explicit homeomorphism of $C$ (a {\em cellular automaton}). 

%

A large part of the motivation for considering $\Bh_g$ comes from the study of smooth mapping class groups. Put a differentiable structure on the closed orientable surface $S_g$ of genus $g$, and realize $C$ as the the middle-third Cantor set on a smoothly-embedded interval on $S_g$. Let $\Mod^s(S_g, C)$ denote the {\em smooth mapping class group} of the pair $(S_g,C)$, namely the group of isotopy classes of smooth diffeomorphisms of $S_g$ preserving globally the Cantor set $C$. The following is a recent result of Neretin and the second author \cite{FNe}: 

\begin{NoNumberTheorem}[\cite{FNe}, Cor. 2]
For every $g \ge 0$, we have $\Bh_g \cong \Mod^s(S_g,C)$.
\end{NoNumberTheorem}

In particular, $\Mod^s(S_g,C)$ is countable; observe that, in stark contrast, the ``topological'' mapping class group $\Mod(\Sigma_g)$ is uncountable. 

\subsection{Results} As we will see, the topological restrictions on the elements of $\Bc_g$ and $\Bh_g$ impose strong finiteness conditions on the groups. More concretely, we will prove:

\begin{theorem}
For every $g\ge 0$,   $\Bc_g$ and $\Bh_g$ are finitely presented. 
\label{thm:fg}
\end{theorem}

We remark that the case $g=0$ of the above theorem was proved by Kapoudjian and the second author in \cite{FK}; in fact, it will serve as the base case for the inductive argument behind the proof of Theorem \ref{thm:fg}. 

In spite of the above result, we will prove that $\Bh_g$ serves as a good approximation for the mapping class group:

\begin{theorem}
For every $g\ge 0$, $\Bh_g$ is dense in $\Mod(\Sigma_g)$.
\label{thm:limit}
\end{theorem}

This theorem should be compared with a recent result of Patel-Vlamis \cite{PV} which asserts that $\PMod_c(\Sigma_g)$ is dense in $\PMod(\Sigma_g)$

\medskip

Next, we turn our attention to the study of properties of the groups $\Bc_g$ and $\Bh_g$, through the comparison with known/expected/desired properties of mapping class groups of finite-type surfaces.

\subsubsection{Homological stability}
Let $S_{g,n}$ denote the surface of genus $g$ with $n$ boundary components. A celebrated result of Harer \cite{Harer} asserts that, for a fixed genus $g\ge 3$, the $k$-th homology group of the mapping class group $\PMod(S_{g,n})$ is independent of $n$, provided that $k$ is ``sufficiently small'' with respect to $g$ (by a result of Boldsen \cite{Boldsen}, $k \le 2g/ 3$ suffices). For this reason, this homology group is called the $k$-th {\em stable homology group} of the mapping class group of genus $g$. 
Using a translation of the proof of \cite[Theorem 3.1]{FK2} to our setting, we will show: 

\begin{theorem}
For every $k \le 2g/3$,  $H_k(\Bc_{g}, \Z)$ and $H_k(\Bh_{g}, \Z)$ are isomorphic to the $k$-th stable homology group of the mapping class group of genus $g$. 
\label{thm:homology}
\end{theorem}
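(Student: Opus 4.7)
\medskip

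The plan is to adapt the argument used for Theorem 3.1 of \cite{FK2} to our finite-genus setting. The defining rigid structure on $\Sigma_g$ exhibits $\Bc_g$ as a group with a natural cocompact action on a contractible complex whose cell stabilizers are (up to finite index) pure mapping class groups of finite-type subsurfaces of genus $g$, so the calculation will be driven by Harer stability together with Brown's equivariant spectral sequence.

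First, I would fix the rigid structure used to define $\Bc_g$ (a pants/separating-multicurve system compatible with the Cantor set of ends) and build a simplicial complex $\mathcal{X}_g$ whose vertices are the \emph{admissible} compact subsurfaces of $\Sigma_g$ of genus $g$, i.e.\ those of the form $S_{g,n}$ that are cut out by finite subcollections of the rigid structure, with a $p$-simplex for each chain $Y_0\subsetneq Y_1\subsetneq \cdots \subsetneq Y_p$ of such subsurfaces. Using the tree-like combinatorics of the rigid structure (the ends of $\Sigma_g$ are parameterized by the ends of a locally finite tree whose finite subtrees index the admissible subsurfaces), one checks that $\mathcal{X}_g$ is contractible: it is the nerve of a cofinal directed system, and one can invoke a standard Quillen-type "poset is filtered" argument as in \cite{FK2}. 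The group $\Bc_g$ acts on $\mathcal{X}_g$ simplicially, and from the asymptotic rigidity condition the stabilizer of an admissible $S_{g,n}\subset \Sigma_g$ contains the pure mapping class group $\PMod(S_{g,n})$ as a finite-index subgroup, with the quotient acting through a finite permutation group on the boundary components.

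Next, I would run Brown's equivariant spectral sequence for this action:
\[
E^1_{p,q}\;=\;\bigoplus_{\sigma\in\mathcal{X}_g^{(p)}/\Bc_g} H_q\bigl(\Stab(\sigma);\mathbb{Q}\bigr)\;\Longrightarrow\; H_{p+q}(\Bc_g;\mathbb{Q}).
\]
Because stabilizers contain $\PMod(S_{g,n})$ with finite index, rational homology sees only $H_q(\PMod(S_{g,n});\mathbb{Q})$. Harer's stability theorem \cite{Harer} then asserts that for $q\le 2g/3$ this group is independent of $n$ (once $n$ is large), equal to the $q$-th stable homology group $H_q^{\mathrm{stab}}$. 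Consequently the $E^1$-page in the range $q\le 2g/3$ takes the form $H_q^{\mathrm{stab}}$ tensored with the rational simplicial chain complex of the quotient $\mathcal{X}_g/\Bc_g$. Since $\Bc_g$ acts transitively on admissible subsurfaces of each fixed topological type and the quotient turns out to be the (contractible) nerve of a totally ordered set of topological types, the $E^1$-page collapses to a single column, yielding $H_k(\Bc_g;\mathbb{Q})\cong H_k^{\mathrm{stab}}$ in the stated range.

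The main obstacle I anticipate is the proof that $\mathcal{X}_g$ is sufficiently highly connected \emph{and} that the quotient $\mathcal{X}_g/\Bc_g$ is acyclic (or a point): one has to control both the geometric aspects of the defining rigid structure and the "Thompson-like" part of $\Bc_g$ which permutes the ends and thus acts nontrivially on the poset of admissible subsurfaces. A secondary technical point is verifying that the edge homomorphisms of the spectral sequence are induced by the Harer stabilization maps, so that the collapse in the range $k\le 2g/3$ genuinely lands in $H_k^{\mathrm{stab}}$ and not in some twisted variant; this should follow, as in \cite{FK2}, by checking that inclusions of admissible subsurfaces $S_{g,n}\hookrightarrow S_{g,n'}$ realize the standard stabilization maps up to conjugation within $\Bc_g$.
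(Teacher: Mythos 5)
Your approach is genuinely different from the paper's, and it has a gap that would derail the argument as written.

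The paper does not use Brown's equivariant spectral sequence for an action on a complex. Instead, it runs the Lyndon--Hochschild--Serre spectral sequence for the extension
\[
1 \to \PMod_c(\Sigma_g) \to \Bc_g \to V \to 1,
\]
shows that $V$ acts trivially on $H_q(\PMod_c(\Sigma_g);\mathbb{Q})$ (because that group is finite-dimensional by Madsen--Weiss, $V$ is simple, and $V$ is not residually finite so the resulting finite-dimensional representation cannot be faithful), and then invokes Brown's theorem that $V$ is rationally acyclic to collapse the spectral sequence to $H_q(\Bc_g;\mathbb{Q}) \cong H_q(\PMod_c(\Sigma_g);\mathbb{Q})$. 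The identification with the stable homology in range $k \le 2g/3$ then comes from Lemma~\ref{lem:limit} plus Harer/Boldsen stability. This is also essentially what [FK2] does, so your description of the route taken in [FK2] is not accurate.

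The concrete problem with your version: when you pass to the complex $\mathcal{X}_g$ of admissible compact subsurfaces, the stabilizer in $\Bc_g$ of an admissible $S_{g,n}$ does \emph{not} contain $\PMod(S_{g,n})$ as a finite-index subgroup. An asymptotically rigid mapping class that fixes $S_{g,n}$ setwise may act arbitrarily (asymptotically rigidly) on the infinite-type complement $\Sigma_g - S_{g,n}$, permuting its Cantor set of ends; this complementary piece contributes an infinite Thompson-like factor. In other words, $\Stab(S_{g,n})$ is an extension of a subgroup of $\Mod(S_{g,n})$ by a large group of asymptotically rigid homeomorphisms of the complement, not a finite extension of $\PMod(S_{g,n})$. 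Consequently the $E^1$-page of your equivariant spectral sequence is not built out of $H_q(\PMod(S_{g,n});\mathbb{Q})$, and the collapse you describe does not follow. The role of isolating the Thompson-like behavior so it can be killed rationally is exactly what the LHS argument via the exact sequence with $V$ accomplishes; your complex-based approach would need to rediscover that phenomenon inside each stabilizer, which is a substantially harder bookkeeping problem and is not addressed in your sketch.
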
 

Powell \cite{Powell} proved that $\Mod(S_g)$ is {\em perfect}, i.e. has trivial abelianization. As a 
consequence of  the proof of Theorem \ref{thm:homology}, we have:

\begin{corollary}\label{cor:perfect}
$\Bc_g$ and $\Bh_g$ are perfect for every $g\ge 3$. 
\end{corollary}

%

\subsubsection{Isomorphic classification} 
With a few well-understood exceptions,
mapping class groups of finite-type surfaces are isomorphic if and only if the underlying surfaces are homeomorphic. To see this, one may compare the  virtual cohomological dimension of the mapping class group \cite{Harer-vcd} with the maximal rank of a free-abelian subgroup \cite{BLM}. In the case of the groups $\Bc_g$ and $\Bh_g$ both these quantities are infinite.  However, we will prove:

\begin{theorem}
If $0\le g<h$ and $2\leq h$, then there are no surjective homomorphisms $\Bc_h \to \Bc_g$. 
\label{thm:Ainj}
\end{theorem}

As will become transparent, the same argument will yield that there are no surjective maps $\Bh_h \to \Bh_g$ (resp. $\Bh_h \to \Bc_g$, and $\Bc_h \to \Bh_g$). 
As a consequence of Theorem \ref{thm:Ainj}, $\Bc_g \cong \Bc_h$ (resp. $\Bh_g \cong \Bh_h$) if and only if $g = h$. In light of this, an obvious question is: 

\begin{question}
Are $\Bc_g$ and $\Bh_g$ isomorphic? 
\end{question}

We stress that, although the results from \cite{BDJ} might suggest a  positive answer to the question above, the answer remains unknown for all values of $g$. In a more general situation, however, if we replace the binary Cantor set $C$ by the set of ends of a regular tree of valence higher than $3$ then the corresponding Thompson groups groups $V$ and $V[\Z_2]$ might be non-isomorphic; compare with \cite{BDJ}.

\subsubsection{Rigidity} A celebrated theorem of Ivanov \cite{Ivanov} states that the mapping class group of a (sufficiently complicated) finite-type surface is {\em rigid}: every automorphism is induced by a surface homeomorphism. This has recently been extended to the infinite-type setting by Patel-Vlamis \cite{PV} and Bavard-Dowdall-Rafi \cite{BDR}.
Along similar lines, our next result asserts that $\Bc_g$ and $\Bh_g$ are also rigid. Given a group $G$ and a subgroup $H$, write $\Aut(G)$ for the automorphism group of $G$, and denote by $N_G(H)$ the normalizer of $H$ in $G$. We have:

\begin{theorem}
For every $g\ge 0$, $\Aut(\Bc_g) \cong N_{\Mod(\Sigma_g)}(\Bc_g)$ and $\Aut(\Bh_g) \cong N_{\Mod(\Sigma_g)}(\Bh_g)$. 
\label{thm:autos}
\end{theorem}

An immediate consequence of Ivanov's theorem in the finite-type case is that the outer automorphism group of $\Mod(S_{g,n})$ is finite; in fact, it is trivial for all but finitely many surfaces. However, in Corollary \ref{cor:out} we will see that this is no longer true for the groups $\Bc_g$ and $\Bh_g$. Denote by $\Out(G)$ the outer automorphism group of a group $G$, that is, the group of conjugacy classes of automorphisms of $G$. We will prove: 

\begin{corollary}
For every $g\ge 0$, $\Out(\Bc_g)$ and $\Out(\Bh_g)$ are  infinite. 
\label{cor:out}
\end{corollary}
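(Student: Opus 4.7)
The plan is to combine Theorem~\ref{thm:autos} with an analysis of the normalizer $N := N_{\Mod(\Sigma_g)}(\Bc_g)$. First, one observes that $\Bc_g$ is centerless: since $\Bc_g$ contains $\Mod(S_{g,n})$ for arbitrarily large $n$, any central element must centralize each such finite-type mapping class group, and the latter are well known to be centerless for $n$ sufficiently large. The identification in Theorem~\ref{thm:autos} also forces the centralizer $C_{\Mod(\Sigma_g)}(\Bc_g)$ to be trivial, since conjugation $N \to \Aut(\Bc_g)$ is claimed to be an isomorphism. Consequently, the subgroup of inner automorphisms $\mathrm{Inn}(\Bc_g) \subseteq \Aut(\Bc_g)=N$ corresponds to $\Bc_g$ itself sitting inside $N$, so that
\[ \Out(\Bc_g) \;\cong\; N/\Bc_g. \]
The corollary thus reduces to producing infinitely many cosets of $\Bc_g$ in $N$.

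The natural source of such cosets is the group $\Bh_g$ of half-twists studied in the appendix. I would first verify $\Bh_g \subseteq N$: each half-twist along a curve of the rigid structure preserves that structure up to a transposition of the two sub-Cantor-sets it separates, and conjugation by such a map therefore preserves asymptotic rigidity; extending this across arbitrary products yields $\Bh_g \subseteq N$. Next, I would produce infinitely many elements of $\Bh_g$ pairwise inequivalent modulo $\Bc_g$, for instance by choosing one half-twist along a curve at each level of the underlying tree of the rigid pants decomposition (of which there are infinitely many). Elements of $\Bc_g$ act on $\Ends(\Sigma_g)$ by ``tree-respecting'' homeomorphisms of the Cantor set, whereas a half-twist along a rigid curve acts by a transposition of the two sub-Cantor-sets it separates, and distinct half-twists yield distinct such transpositions; hence these representatives remain distinct in $N/\Bc_g$.

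The main obstacle lies in this last step: rigorously pinning down the quotient $\Bh_g/\Bc_g$, or at least producing a genuinely infinite family of pairwise distinct cosets. This requires a careful analysis of how $\Bc_g$ and $\Bh_g$ act on the combinatorial tree underlying the rigid structure, which is essentially what the appendix must supply in order to compare the two groups. If the half-twist approach turns out to be too delicate, an alternative would be to bypass $\Bh_g$ entirely and exhibit an explicit infinite-order element of $N\setminus \Bc_g$---for instance, an ``asymptotic shift'' of the rigid tree that permutes the sub-Cantor-sets by a homeomorphism of infinite order---together with a reason why its powers never land in $\Bc_g$ (e.g.\ that $\Bc_g$ acts on the Cantor end set by homeomorphisms of bounded tree-depth displacement, while the shift does not).
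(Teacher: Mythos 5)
The reduction to showing that $N := N_{\Mod(\Sigma_g)}(\Bc_g)$ has infinite index over $\Bc_g$ is sound: the centralizer of $\PMod_c(\Sigma_g)$ in $\Mod(\Sigma_g)$ is trivial (Lemma~\ref{lem:centralizer}), so $\Bc_g$ is centerless and, via Theorem~\ref{thm:autos}, $\Out(\Bc_g)\cong N/\Bc_g$. The gap is in your proposed supply of nontrivial cosets. A half-twist along a single curve $c$ of the underlying pants decomposition $P$ is supported on a compact $P$-suited subsurface; any such mapping class is the identity outside a compact $P$-suited $Z$, hence asymptotically rigid by Definition~\ref{def:asrig1}. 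So every one of your chosen half-twists already lies in $\Bc_g$ and represents the \emph{trivial} coset of $N/\Bc_g$. Taking them at different levels of the tree does not help. (Your claim that they yield distinct transpositions of sub-Cantor-sets is correct, but it shows these elements are distinct in $\Bc_g$, not distinct modulo $\Bc_g$.) In short, $\Bh_g$ does not obviously yield any nontrivial coset via finite products of half-twists, precisely because $\Bc_g$ absorbs everything of compact support.

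Your second, fallback suggestion -- an ``asymptotic shift'' or an infinite product that normalizes $\Bc_g$ but has no power in $\Bc_g$ -- is the right kind of idea and is essentially what the paper does later, but for a different statement: to show that the \emph{kernel} of $\Out(\Bc_g)\to\Out(V)$ is infinite it takes $t_\infty$, the infinite product of all left half-twists about elements of $P$, shows $t_\infty\in N$ while $t_\infty^d\notin\Bc_g$ for $d\neq 0$ (since $t_\infty^d$ fixes all ends but is not compactly supported). That argument, carried out carefully, would also prove Corollary~\ref{cor:out}. However, the paper's actual proof of this corollary is quite different: it applies Rubin's reconstruction theorem to obtain a natural surjection $N_{\Mod(\Sigma_g)}(\Bc_g)\to N_{\Homeo(C)}(V)$, and hence a surjection $\Out(\Bc_g)\to\Out(V)$, and then cites the recent result of Bleak--Cameron--Maissel--Navas--Olukoya that $\Out(V)$ is infinite. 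So: your reduction is fine, your main construction fails, and your alternative, while pointed in the right direction, is left too vague and in any case differs from the route the paper takes.
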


\begin{example}
Let $P$ be the pants decomposition underlying the rigid structure on 
$\Sigma_0^1$, and consider the element $t_{\infty} \in \Mod(\Sigma_0^1)$ obtained as the product of all (say left) half-twists about the curves of $P$. 
We further embed $\Sigma_0^1$ in $\Sigma_g$ in such way that the pants decomposition $P$ underlying the rigid structure of $\Sigma_0^1$ is sent to the pants decomposition underlying the rigid structure of $\Sigma_g$. We will show later (see Lemma \ref{lem:infkernel}) that the image of $t_\infty$ under the homomorphism induced by the embedding $\Sigma_0^1 \to \Sigma_g$ produces an infinite-order element 
of $N_{\Mod(\Sigma_g)}(\Bc_g)$ lying in the kernel of the homomorphism $\Out(\Bc_g)\to \Out(V)$.
\end{example}\label{example}

\subsubsection{Homomorphisms from lattices} Building up on work of Ivanov \cite{Ivanov-lattice} and Kaimanovich-Masur \cite{KM}, Farb-Masur \cite{FM} proved that any homomorphism from a higher-rank lattice to a finite-type mapping class group has finite image (see also \cite{BF,H} for different proofs of this result). Using this result, we will prove:

\begin{theorem}
Let $\Gamma$ be a lattice in a  semisimple Lie group $G$ of real rank at least two, where $G$ has no compact factors isogenous to ${\rm SU}(1,n)$ or ${\rm SO}(1,n)$. 
For every $g\ge 0$, any homomorphism from $\Gamma$ to $\Bc_g$ or $\Bh_g$ has finite image. 
\label{thm:lattice}
\end{theorem}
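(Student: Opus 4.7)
The plan is to combine the Margulis normal subgroup theorem with the Farb--Masur theorem~\cite{FM} for finite-type mapping class groups, via a reduction of the image of any homomorphism $\phi\colon\Gamma\to \Bc_g$ to a mapping class group of a compact subsurface.

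First, one invokes the Margulis normal subgroup theorem: under the stated hypotheses on $G$, every normal subgroup of $\Gamma$ is finite or of finite index. Applied to $\ker\phi$, either $\phi$ has finite image (and there is nothing to prove), or $\ker\phi$ is finite; in the latter case, pass to a finite-index subgroup $\Gamma'\leq \Gamma$ on which $\phi$ is injective. Next, the same hypotheses guarantee that $\Gamma'$ has Kazhdan's Property (T), and in particular Serre's Property (FA): every simplicial action of $\Gamma'$ on a tree has a global fixed vertex.

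Second, I would exploit the fact that $\Bc_g$ acts naturally on a simplicial tree $T_g$ whose vertices parametrise ``admissible'' rigid structures on $\Sigma_g$ (equivalently, pants decompositions compatible with the rigid structure at infinity), and whose vertex stabilisers are, up to finite index, mapping class groups $\Mod(S_{g,n})$ of compact subsurfaces with some number of boundary components $n$. Such a tree ought to be extracted from a Bass--Serre decomposition of $\Bc_g$ as an iterated amalgam, in the spirit of the description of the Thompson-like group $\Bc_0$ in~\cite{FK}, or from the construction of~\cite{FK2} in the case $g=\infty$. By Property (FA), $\phi(\Gamma')$ stabilises a vertex of $T_g$, and hence lies inside $\Mod(S_{g,n})$ for some $n$. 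One then applies the Farb--Masur theorem to the restriction $\phi|_{\Gamma'}\colon\Gamma'\to \Mod(S_{g,n})$: since $\Gamma'$ is itself a higher-rank lattice, its image is finite; combined with $[\Gamma\colon\Gamma']<\infty$, this forces $\phi(\Gamma)$ itself to be finite.

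The main obstacle will be supplying the tree $T_g$ with the required mapping-class-group vertex stabilisers. Should an explicit simplicial-tree description be unavailable, a workaround is to use finite generation of $\Gamma'$ together with the asymptotic rigidity of a finite generating set to confine $\phi(\Gamma')$ to a finite-index overgroup of some $\Mod(S_{g,n})$: the induced action on the combinatorial end-structure projects into a Higman--Thompson-like quotient of $\Bc_g$, and one argues separately that higher-rank lattices have finite image in such groups (again via Property (FA) applied to their natural action on the associated Bass--Serre tree), reducing once more to the Farb--Masur theorem.
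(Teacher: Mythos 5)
Your proposal and the paper's proof share the same skeleton — split the problem along the exact sequence $1\to\PMod_c(\Sigma_g)\to\Bc_g\to V\to 1$, handle the kernel side by Farb--Masur, and use a rigidity property of $\Gamma$ against the Thompson-like quotient — but the tool you invoke for the quotient side is not available, and this is a genuine gap.

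Your main route postulates a simplicial tree $T_g$ on which $\Bc_g$ acts with vertex stabilisers commensurable to $\Mod(S_{g,n})$, and then applies Property (FA). No such tree is constructed in the paper, and I do not believe one exists: the complex $\mathcal K_g$ on which $\Bc_g$ acts is a $2$-complex (full of triangles, squares, pentagons) rather than a tree, and, more to the point, any quotient of $\Bc_g$ that ``sees'' the action at infinity is a Thompson-like group, not a Bass--Serre fundamental group of a graph of finite-type mapping class groups. Your fallback (``Property (FA) applied to the natural Bass--Serre tree of the Higman--Thompson-like quotient'') has the same problem: Thompson's group $V$ is simple and finitely presented, and there is no natural Bass--Serre tree for it with useful stabilisers; Property (FA) on its own gives you nothing against $V$. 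What actually makes the argument go through in the paper is Farley's theorem that $V$ has the \emph{Haagerup property}. Since $\Gamma$ has Property (T) (no need for the Margulis normal subgroup theorem, incidentally — that step is superfluous), and a group with both Property (T) and the Haagerup property is finite, the image of $\Gamma$ in $V$ is finite. The intersection $\phi(\Gamma)\cap\ker(p)$ is then finitely generated, hence contained in some $\PMod(S)$ by Lemma \ref{lem:limit}, and Farb--Masur finishes as you intended. So the missing idea is precisely the Haagerup property of $V$: Property (FA) is too weak here, and the tree you would need does not exist.
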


\subsubsection{Kazhdan's Property (T)} A compactly generated group has {\em Kazhdan's Property (T)} if every unitary representation that has almost invariant vectors also has an invariant vector. Since we will not need any further details about Property (T), we simply refer the reader to the book \cite{BHV} for details. 
It is expected \cite{A} that mapping class groups of finite-type surfaces do not have Property (T). Using similar arguments to the ones in the proof of Theorem \ref{thm:lattice}, we will observe: 

\begin{theorem}
 $\Bc_g$ and $\Bh_g$ do not have Kazhdan's Property (T) for any $g\ge 0$. 
\label{thm:T}
\end{theorem}

\subsubsection{Non-linearity} A well-known open question asks  whether finite-type mapping class groups are  linear. The only known result in this direction is a theorem of Bigelow-Budney \cite{Bigelow}, who proved that the mapping class group of the closed surface of genus two is linear, by exploiting its relation with braid groups. 
In sharp contrast, in Proposition \ref{split} we will see that $\Bc_g$ contains an isomorphic copy of Thompson's group $F$, for all $g\ge 0$. Combining this with Theorem \ref{thm:fg}, we have: 

\begin{theorem}
$\Bc_g$ is not linear for any $g\ge 0$. In particular, $\Bh_g$ is not linear either. 
\label{thm:nonlinear}
\end{theorem}



\medskip

\noindent{\bf Plan of the paper.}  In Section \ref{sec:defs} we give some basic definitions and set the notation used in the rest of the paper. In Section \ref{sec:rigid} we will define the groups $\Bc_g$ and $\Bh_g$. In Section \ref{sec:V} we will explore their relation to  Thompson's group $V$, which will be a key ingredient in the proof of many of our main results. We then proceed to prove the results mentioned in the introduction. In this direction, Theorem \ref{thm:fg} will be proved in Section \ref{sec:proofT1}, and Theorem \ref{thm:limit} in Section \ref{sec:dense}.  Section  \ref{sec:homology}. will deal with the proof of Theorem \ref{thm:homology}. In Section \ref{sec:injective} we will prove Theorem \ref{thm:Ainj}, while Section \ref{sec:proofaut} is concerned with the proof of Theorem \ref{thm:autos}. In Section \ref{sec:lattices} we will establish Theorems \ref{thm:lattice} and \ref{thm:T}.

\medskip

\noindent{\bf Acknowledgements.} We thank Y. Antol\'in,  J. Bavard, L. Bowen, J. Hern\'andez, C. Kapoudjian, T. Koberda, C. Mart\'inez-P\'erez, H. Parlier, P. Patel, J. Souto, and N. Vlamis for conversations. We are indebted to Juan Souto for suggesting the proof of Lemma \ref{lem:twists2twists} and to the referee for carefully reading our paper and his numerous comments and corrections. Parts of this paper were written while the first author was visiting Yale University, to which he is grateful for its hospitality.
\section{Preliminaries}
\label{sec:defs} 

In this section we recall some of the basic definitions about surfaces and their mapping class groups. 

\subsection{Curves and surfaces} Let $S$ be a connected orientable surface, of finite or infinite topological type. If $S$ has punctures, we will regard them either as marked points on $S$ or as topological ends of $S$, and we will feel free to switch between the two viewpoints without any further mention. We will denote by $S_{g,n}^p$ the compact surface of genus $g\ge 0$ with $n\ge 0$ boundary components and $p\ge 0$ punctures. Similarly, let $\Sigma_{g,n}^p$ the closed surface of genus $g\ge 0$ with a Cantor set removed, with $n\ge 0$ boundary components and $p\ge 0$ isolated punctures. If either $n$ or $p$ is equal to zero we will simply omit it from the notation.  

 By a {\em curve} on $S$ we mean the isotopy class of a simple closed curve on $S$ which does not bound a disk, a once-punctured disk, or an annulus whose other boundary curve is a boundary component of $S$. We say that two curves are {\em disjoint} if they have disjoint representatives on $S$; otherwise we say that they intersect.  Given curves $a, b\subset S$, we define their {\em intersection number} $i(a,b)$ as the minimal number of points of intersection  between representatives. 
Note that $i(a,b)$ is always finite, even on a surface of infinite topological type, as curves are compact.
 
 A {\em multicurve} on $S$ is a set of pairwise disjoint curves on $S$. We say that a multicurve $M$ is {\em locally finite} if any compact subsurface of $S$ intersects only finitely many elements of $M$.  A {\em pants decomposition} of $S$ is a locally-finite multicurve that is maximal with respect to inclusion; as such, its complement on $S$ is a disjoint union of 3-holed spheres, or {\em pairs of pants}. 
 
 Finally, an {\em arc} on $S$ is a non-trivial isotopy class of properly embedded arcs on $S$.

\subsection{Mapping class group} Let $S$ be a connected orientable surface, possibly of infinite type. The {\em mapping class group} $\Mod(S)$ is the group of isotopy classes of self-homeomorphisms of $S$, where homeomorphisms and isotopies are required to fix the boundary of $S$ pointwise. 
We record the following immediate observation for further use: 

\begin{lemma}
For $g\ge 0$ and $n\ge 1$,  $\Mod(S_{g,n})<\Mod(\Sigma_g)$.
\end{lemma}

In what follows, we will need to make use of the following further subgroups of $\Mod(S)$. The {\em pure mapping class group}  $\PMod(S)$ is the subgroup of $\Mod(S)$ whose elements fix every topological end of $S$. The {\em compactly supported} mapping class group $\PMod_c(\Sigma_g)$ is the subgroup of $\PMod(\Sigma_g)$ whose elements are the identity outside a compact subsurface of $\Sigma_g$. The following is an easy observation:

\begin{lemma} Let $g\ge 0$. Consider any family $\{S_i\}$ of compact subsurfaces of $\Sigma_g$ whose union equals $\Sigma_g$, partially ordered with respect to inclusion. Then 
$\PMod_c(\Sigma_g) \cong \varinjlim (\PMod(S_i)).$
\label{lem:limit}
\end{lemma}




\section{Asymptotic mapping class groups} 
\label{sec:rigid} 
In this section we define the groups $\Bc_g$ and $\Bh_g$. 
We start by introducing the notion of rigid structure, which appeared originally in \cite{FK}. 

\subsection{Rigid structures} As in the introduction, it will be convenient to start with the genus-zero case. 
The reader should keep Figure \ref{fig:genus0} in mind. A {\em rigid structure} on $\Sigma_0$ is a triple $(P,A,\Sigma_0^+)$, where:
\begin{itemize}
\item $P$ is a pants decomposition of $\Sigma_0$, called the  {\em pants decomposition  underlying the rigid structure},
\item $A$ is a set of pairwise-disjoint arcs on $\Sigma_0$ such that for every pair of pants $Y$ of $P$ (that is, the closure of a  connected component of $\Sigma_0 - P$), there are exactly three elements of $A$ intersecting $Y$ essentially, each  connecting a different pair of boundary curves of $Y$,
\item $\Sigma_0^+$ is one of the two connected components of $\Sigma_0 - \bigcup_{a \in A} a$, called the {\em visible side} of $\Sigma_0$. 
\end{itemize}

Observe that, up to the action of $\Mod(\Sigma_0)$, there is only one rigid structure on $\Sigma_0$. 

 It will be useful  to extend the definition of  rigid structure to the surface $\Sigma_0^p$ obtained from $\Sigma_0$ by removing a finite collection of $p\ge 1$  points.  In this case, we define a rigid structure on  $\Sigma_0^p$ as a rigid structure on the surface  (homeomorphic to $\Sigma_0$) obtained from $\Sigma_0^p$ by filling every isolated puncture, subject to the condition that every isolated puncture of $\Sigma_0^p$ is contained in the same connected component of $\Sigma_0 - P$, where $P$ is the pants decomposition underlying the rigid structure of $\Sigma_0$. 

Finally, consider the case of $1\le g<\infty$. A {\em rigid structure} on $\Sigma_g$  consists of a curve $c \subset \Sigma_g$ that cuts off a disk containing every puncture of $\Sigma_g$, together with a rigid structure for the planar component of $\Sigma_g - c$, namely the one  homeomorphic  to $\Sigma_0^1$.

\subsection{The groups $\Bc_g$ and $\Bh_g$} We now define the groups $\Bc_g$ and $\Bh_g$. As mentioned above, we stress that the case $g=0$ was previously introduced  in \cite{FK,FN}.
Fix, once and for all, a rigid structure on $\Sigma_g$, and write $P$ for   the pants decomposition underlying it. 
We say that a compact subsurface $Z \subset \Sigma_g$ is {\em $P$-suited} if 
$\partial Z \subset P$, namely each boundary curve of $Z$ is an element of the pants decomposition $P$.

\begin{definition}
A homeomorphism $f:\Sigma_g \to \Sigma_g$ is {\em asymptotically rigid} if there exists a $P$--suited genus-$g$ subsurface $Z \subset \Sigma_g$ with $f(Z)$ also $P$--suited,  and such that the restriction homeomorphism $f: \Sigma_g - Z \to \Sigma_g - f(Z)$ sends: 
\begin{itemize}
\item $P \cap (\Sigma_g- Z)$ to  $P \cap (\Sigma_g- f(Z))$,
\item $A \cap (\Sigma_g- Z)$ to  $A \cap (\Sigma_g- f(Z))$,
\item The visible side of $\Sigma_g-Z$ to the visible side of $\Sigma_g-f(Z)$. 
\end{itemize}
If we drop the last requirement we say that $f$ is {\em asymptotically quasi-rigid}.
\label{def:asrig1}
\end{definition}

\noindent{\bf{Notation.}} Given an asymptotically rigid (resp. quasi-rigid) homeomorphism $f$ as above, we will refer to the subsurface $Z$ in the definition above as a {\em defining subsurface} for $f$.

\begin{remark}
Observe that if $f:\Sigma_g \to \Sigma_g$ is asymptotically (quasi-) rigid and $Z$ is a defining surface for $f$, then any $P$-suited surface containing $Z$ is also a defining surface for $f$. This observation will be heavily used in the rest of the paper, without further mention. 
\end{remark}

\medskip

We are finally in a position to define the groups we are interested in: 

\begin{definition}
Let $g,p\ge 0$. We define $\Bc_g$ (resp. $\Bc_0^p$) as the  subgroup of $\Mod(\Sigma_g)$ (resp. $\Mod(\Sigma_0^p)$) consisting of those elements that have at least one asymptotically rigid representative. 

In turn, the group $\Bh_g$ is the subgroup of $\Mod(\Sigma_g)$ consisting of those elements that have at least one asymptotically quasi-rigid representative. 
\end{definition} 

Observe that $\PMod_c(\Sigma)\subset \Bc_g$, by definition. However, the inclusion is proper, as a general element of $\Bc_g$ may permute the components of the complement of every defining subsurface.

%
\begin{remark}
The group of mapping classes of asymptotically quasi-rigid homeomorphisms of the disk punctured along a Cantor set coincides with the braided Thompson group considered by Brin \cite{Brin} and Dehornoy \cite{Deh}. 
The subgroup $BV$ of mapping classes of asymptotically rigid homeomorphisms of the disk punctured along a Cantor set can be realized as a subgroup of $\Bc_0$ (see \cite{FK}, section 7).  
\end{remark}




\section{The relation with Thompson's groups}

\label{sec:V}
As mentioned in the introduction, the groups $\Bc_g$ and $\Bh_g$ are strongly related to Thompson's groups. This is a manifestation of a more general phenomenon, which we now explain. 

Observe that any homeomorphism of $\Sigma_g$ induces a homeomorphism of the {\em space of ends} (see, for instance, \cite{BH}) of $\Sigma_g$, which by definition is the Cantor set  $C$. Thus we have a 
continuous homomorphism 
\begin{equation}
\epsilon: \Homeo(\Sigma_g) \to \Homeo(C),
\label{eq:homeo1}
\end{equation}
which is surjective when both homeomorphism groups are endowed with their respective compact-open topologies.

Now, every Cantor set on the plane is {\em tame}, meaning that, up to homeomorphism, it is contained in some line; in particular, it is homeomorphic to the standard triadic Cantor set. A theorem of Sc\v{e}pin (see e.g. \cite[Theorem 1]{Verj}) states that any homeomorphism of the standard triadic Cantor set $C\subset [0,1]\subset \R^2$ extends to a homeomorphism  of $\R^2$; moreover, this homeomorphism can be assumed to be the identity outside a large enough ball. In particular, we have: 

\begin{lemma}
The homomorphism $\epsilon: \Homeo(\Sigma_g) \to \Homeo(C)$ is onto. 
\label{lem:cantorsurj}
\end{lemma}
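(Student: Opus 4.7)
The plan is to fix a concrete model of $\Sigma_g$ and then use Sc\v{e}pin's extension theorem to promote an arbitrary homeomorphism of the end space $C\cong\Ends(\Sigma_g)$ to a homeomorphism of the ambient closed surface that preserves $C$ setwise and is the identity off a small disk. Concretely, I would realize $\Sigma_g$ as $S_g\setminus C$, where $S_g$ is a closed surface of genus $g$, $D\subset S_g$ is an embedded closed disk, and $C\subset\operatorname{int}(D)$ is a standard triadic Cantor set; this is permissible by Theorem \ref{thm:class} together with the fact that any two tame Cantor sets in the plane are ambient-homeomorphic.

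Given $\phi\in\Homeo(C)$, my first step is to apply Sc\v{e}pin's theorem as quoted in the excerpt to extend $\phi$ to a homeomorphism $\widetilde\phi:\R^2\to\R^2$ that is the identity outside a compact subset of $\R^2$. After precomposing with a radial rescaling of $\R^2$ and conjugating by a homeomorphism that carries the resulting support into $\operatorname{int}(D)$, I may assume that $\widetilde\phi$ is supported in $\operatorname{int}(D)$ and equals the identity on an open neighborhood of $\partial D$.

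My second step is to extend $\widetilde\phi$ by the identity on $S_g\setminus D$ to produce a map $F:S_g\to S_g$. Because $\widetilde\phi$ agrees with the identity on a neighborhood of $\partial D$, both $F$ and the map built in the same way from $\widetilde\phi^{-1}$ are continuous, so $F\in\Homeo(S_g)$. Since $F(C)=\phi(C)=C$, the restriction $F|_{\Sigma_g}$ is a self-homeomorphism of $\Sigma_g$. To finish, I would observe that each $c\in C$ indexes the end of $\Sigma_g$ defined by the nested family of punctured disk neighborhoods of $c$ in $D$; as $F$ is a homeomorphism of $S_g$ sending $c$ to $\phi(c)$, it maps such a basis of neighborhoods to the corresponding basis at $\phi(c)$, so $\epsilon(F|_{\Sigma_g})=\phi$ as required.

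The only place where anything could plausibly be delicate is the first step, namely translating Sc\v{e}pin's theorem (stated for $\R^2$) into an extension with support inside the disk $D$; but a rescaling and conjugation dispose of this immediately, so I do not anticipate a substantial obstacle.
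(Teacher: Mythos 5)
Your argument is correct and follows essentially the same route as the paper, which likewise reduces the lemma to Sc\v{e}pin's extension theorem (extending a homeomorphism of the standard triadic Cantor set $C\subset[0,1]\subset\R^2$ to a compactly supported homeomorphism of $\R^2$) and then inserts the resulting homeomorphism into the surface through a disk. The only minor wrinkle is that precomposing $\widetilde\phi$ with a rescaling, or conjugating it, changes its restriction to $C$; it is cleaner either to identify $\operatorname{int}(D)$ with $\R^2$ from the outset (so that ``identity outside a large ball'' automatically means supported in $\operatorname{int}(D)$) or to let the disk $D$ depend on $\phi$, but this does not affect the substance of the argument.
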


Moreover, $\Homeo(C)$ is a simple group \cite{Anderson}, and thus  the connected component of the identity in $\Homeo(C)$ is trivial. Therefore, $\epsilon$ descends to a continuous surjective homomorphism (using a slight abuse of notation)
\begin{equation}
\epsilon: \Mod(\Sigma_g) \to \Homeo(C),
\label{eq:mod1}
\end{equation}
where $\Mod(\Sigma)$ has been endowed with the quotient topology coming from the compact-open topology on  $\Homeo(\Sigma_g)$. Observe that the kernel of this homeomorphism is exactly the pure  mapping class group $\PMod(\Sigma_g)$, and thus we have a short exact sequence
\begin{equation}
1 \to \PMod(\Sigma_g) \to \Mod(\Sigma_g) \to \Homeo(C) \to 1
\label{eq:ses}
\end{equation}

We are now going to give the version of the exact sequence \eqref{eq:ses} when $\Mod(\Sigma_g)$ is replaced by $\Bc_g$ or $\Bh_g$. As we will see, the subgroup which appears on the right will be (isomorphic to) Thompson's group $V$. We start by giving a definition of this group. 

\subsection{Thompson's group $V$}
\label{subsec:V}
Recall that {\em Thompson's group} $V$ is the group of right-continuous bijections of the unit circle that map the set of dyadic rationals to itself, are differentiable except at finitely many points, and  on every interval of differentiability they are affine maps whose derivatives are powers of 2. The group $V$ is well-known to be finitely presented, with respect to an explicit presentation. We refer the reader to the standard reference \cite{CFP} for a thorough discussion on the different Thompson's groups. 
Extending results of \cite{FK,FK2}, we will prove: 

\begin{proposition}
For every $g\ge 0$, there is a short exact sequence \begin{equation}
1 \to \PMod_c(\Sigma_g) \to \Bc_g \to V \to 1.
\label{eq:sesB}
\end{equation}  
\label{prop:sesB}
\end{proposition}

\begin{remark}
As mentioned in the introduction, Proposition \ref{prop:sesB} is shown in \cite{FK} in the case when $g=0$, where it is also shown that it splits over Thompson's group $T$; compare with Proposition \ref{split}.
\end{remark}

In order to prove Proposition \ref{prop:sesB}, it will be useful to work with a different incarnation of the group $V$ (see \cite{CFP} also). Namely, $V$ is the group of self-transformations of the rooted 3-valent tree $\mathcal{T}$ whose elements are encoded by equivalence classes of triples $(T,T', \sigma)$, where $T$ and $T'$ are finite rooted subtrees of $\mathcal T$ with the same number of leaves, and $\sigma$ is a bijection between the set of leaves of $T$ and $T'$. Such a triple extends to a transformation of $\mathcal{T}$, and the equivalence relation responds to the fact that different triples may extend to the same transformation of $\mathcal{T}$. 

Every element of $V$ induces a homeomorphism of the space of ends of $\mathcal{T}$, which is homeomorphic to the Cantor set $C$.  Thus $V < \Homeo(C)$. Armed with this alternate description, we adapt the arguments of \cite[Section 2]{FK} in order to prove Proposition \ref{prop:sesB}

\begin{proof}[Proof of Proposition \ref{prop:sesB}]
As mentioned above, the case $g=0$ is covered in \cite{FK}, so assume that $g \ge 1$. Fix a rigid structure on $\Sigma_g$, and let 
$c$ be the separating curve used to define it. The dual graph of the pants decomposition $P$ underlying the rigid structure is naturally isomorphic to $\mathcal{T}$, where the root corresponds to the unique pair of pants of $\Sigma_g-P$ having $c$ as boundary curve.

We now define a homomorphism $\Bc_g \to V$ as follows. Let $f\in \Bc_g$ and consider a defining subsurface $Z$ for $f$.  We associate to $f$ the triple $(T,T',\sigma) \in V$, where $T$ (resp. $T'$) is the  subtree of $\mathcal T$ contained in $Z$ (resp. $f(Z)$), and $\sigma$ is the bijection between the sets of leaves of $T$ and $T'$, respectively, induced by the permutation between the sets of boundary components of $Z$ and $f(Z)$ given by $f$.
At this point, one easily checks that this correspondence gives rise to a well-defined surjective homomorphism $\Bc_g \to V$.

We claim that the kernel of this homomorphism is exactly $\PMod_c(\Sigma_g)$. Indeed, suppose $f\in \Bc_g$ maps to the identity in $V$, and so in particular fixes every end of $\Sigma_g$. Consider a  defining subsurface $Z$ for $f$.
 Since $f$ does not permute ends we may assume, up to 
replacing $Z$ by a suitably larger defining subsurface, that every component of $\Sigma_g -Z$ is mapped to itself.
Therefore, after further enlarging $Z$ if necessary, we deduce that $f$ is the identity outside $Z$, and hence has compact support, as desired.
\end{proof}

Next, observe that if we replace $\Mod(\Sigma_g)$ by the asymptotic mapping class group $\Bc_g$ in equation \eqref{eq:ses}, we obtain:
\begin{equation}
1 \to \PMod(\Sigma_g) \cap \Bc_g \to \Bc_g \to \Homeo_{\Bc_g}(C) \to 1,
\label{eq:sesB1}
\end{equation}
where $\Homeo_{\Bc_g}(C)$ denotes the image of $\Bc_g$ in $\Homeo(C)$ under the homomorphism $\epsilon$ of \eqref{eq:mod1}. 
By the same argument that we used to determine the kernel of the homomorphism $\Bc_g \to V$, we have: 

\begin{lemma}
For every genus $g$, we have $\PMod(\Sigma_g) \cap \Bc_g = \PMod_c(\Sigma_g)$.
\label{lem:techcompact}
\end{lemma}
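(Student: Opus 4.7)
The statement is an equality of two subgroups of $\Bc_g$, and I would verify each inclusion separately, leaning heavily on the work already done in the proof of Proposition \ref{prop:sesB}.

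For the inclusion $\PMod_c(\Sigma_g)\subseteq \PMod(\Sigma_g)\cap \Bc_g$, I would argue directly from the definitions. By definition, any $f\in \PMod_c(\Sigma_g)$ induces the identity on $\Ends(\Sigma_g)$, and so $f\in \PMod(\Sigma_g)$. On the other hand, if $K\subset\Sigma_g$ is a compact support for $f$, I can enlarge $K$ to a $P$-suited subsurface $Z$ (e.g.\ by adjoining the finitely many complementary pairs of pants of $P$ that meet $K$); outside $Z$ the map $f$ is literally the identity, so it trivially sends the pieces of the rigid structure lying in $\Sigma_g-Z$ to themselves, and hence $f\in \Bc_g$ with $Z$ as a defining subsurface.

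For the reverse inclusion, I would observe that $\Bc_g\cap\PMod(\Sigma_g)$ is by definition the kernel of the natural map $\Bc_g\to \Homeo(\Ends(\Sigma_g))$ induced by the action on ends, and that this kernel has already been computed in the proof of Proposition \ref{prop:sesB}. Indeed, the map $\Bc_g\to V$ constructed there is, by construction, compatible with the action on ends: a symbol $(T,T',\sigma)$ is built from defining subsurfaces whose boundary curves correspond bijectively to the leaves of $T$ and $T'$, and each such leaf encodes a clopen subset of $\Ends(\Sigma_g)\cong \partial\mathcal{T}$; the bijection $\sigma$ records exactly how $f$ permutes these clopen sets. Therefore an element of $\Bc_g$ acts trivially on ends if and only if its symbol is (equivalent to) $(T,T,\mathrm{id})$, i.e.\ it lies in the kernel of $\Bc_g\to V$. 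The proof of Proposition \ref{prop:sesB} identifies this kernel with $\PMod_c(\Sigma_g)$, which yields the desired inclusion.

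The only technical content, already absorbed into the proof of Proposition \ref{prop:sesB}, is the step showing that an end-preserving asymptotically rigid homeomorphism is isotopic to one with compact support: one enlarges a given defining subsurface $Z$ so that each complementary component of $\Sigma_g-Z$ is sent to itself (possible because each such component is determined by its set of ends), and then uses the rigidity of the pants decomposition together with the arcs and preferred visible side to trivialise $f$, pair of pants by pair of pants, outward from $\partial Z$, absorbing any residual boundary twists into a final enlargement of $Z$. This is the main subtlety; once one trusts the analysis carried out in Proposition \ref{prop:sesB}, the lemma is essentially a rephrasing of that kernel computation.
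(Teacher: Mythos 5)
Your proof is correct and takes essentially the same approach as the paper: the paper's own proof of this lemma is a one-line appeal to "the same argument that we used to determine the kernel of the homomorphism $\Bc_g\to V$," which is precisely the reduction you carry out. Your write-up is in fact somewhat more careful than the paper's, since you make explicit the identification of $\ker(\Bc_g\to V)$ with $\Bc_g\cap\PMod(\Sigma_g)$ via the compatibility of the symbol map with the action on ends, and you spell out the enlargement-of-$Z$ step that the paper leaves implicit.
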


At this point, the combination of equations \eqref{eq:sesB} and \eqref{eq:sesB1} yields: 

\begin{corollary}
 $\Homeo_{\Bc_g}(C)$ is isomorphic to Thompson's group $V$. 
\end{corollary}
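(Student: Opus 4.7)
The plan is simply to chain the two short exact sequences \eqref{eq:sesB} and \eqref{eq:sesB1} through the First Isomorphism Theorem, using Lemma \ref{lem:techcompact} to identify the kernels. Everything we need has already been assembled in the preceding pages, so the proof should be essentially a bookkeeping exercise.

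More concretely, first I would invoke Proposition \ref{prop:sesB} to obtain a surjective homomorphism $\pi_V : \Bc_g \to V$ (the surjectivity is established inside the proof of that proposition, by exhibiting every symbol $(T,T',\sigma)$ as the image of an asymptotically rigid homeomorphism). The kernel of $\pi_V$ is, by construction, $\PMod_c(\Sigma_g)$, so the First Isomorphism Theorem yields a canonical isomorphism
\[
\overline{\pi_V} : \Bc_g / \PMod_c(\Sigma_g) \xrightarrow{\;\cong\;} V.
\]

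Next I would turn to the sequence \eqref{eq:sesB1}, which is exact by construction (the right-hand map is the restriction of the surjection $\epsilon$ of \eqref{eq:mod1} onto its image $\Homeo_{\Bc_g}(C)$). Applying Lemma \ref{lem:techcompact}, the kernel $\PMod(\Sigma_g) \cap \Bc_g$ of this restriction is precisely $\PMod_c(\Sigma_g)$. Hence the First Isomorphism Theorem again gives a canonical isomorphism
\[
\overline{\epsilon}: \Bc_g / \PMod_c(\Sigma_g) \xrightarrow{\;\cong\;} \Homeo_{\Bc_g}(C).
\]

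Composing the two isomorphisms gives $\Homeo_{\Bc_g}(C) \cong V$, as required. There is no real obstacle here; the only point worth noting is that one is not claiming $\pi_V$ and $\epsilon|_{\Bc_g}$ are literally the same map (they have different codomains), but only that they factor through the same quotient, which is enough to conclude via $\overline{\epsilon} \circ \overline{\pi_V}^{-1}$.
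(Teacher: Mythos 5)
Your proof is correct and is exactly the argument the paper has in mind: combine the two short exact sequences \eqref{eq:sesB} and \eqref{eq:sesB1}, identify their kernels via Lemma \ref{lem:techcompact}, and apply the First Isomorphism Theorem to each before composing. The remark that $\pi_V$ and $\epsilon|_{\Bc_g}$ need only factor through the same quotient (not be literally equal) is a careful touch, but there is no substantive difference from the paper's route.
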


In particular, we have deduced that the restriction of the short exact sequence \eqref{eq:SESgen} to $\Bc_g$ is precisely the sequence \eqref{eq:sesB}.

\subsection{A related Higman-Thompson group} We now give a brief description of the subgroup $V_2[\mathbb{Z}_2]$ of $\Homeo(C)$ that appears when restricting the exact sequence \eqref{eq:SESgen} to $\Bh_g$. The elements of $V_2[\mathbb{Z}_2]$ are transformations of $\mathcal{T}$ encoded 
 by equivalence classes of tuples $(T,T',\sigma,\varepsilon)$, where $T, T'$ are subtrees, $\sigma:\partial T\to \partial T'$ is a bijection and $\varepsilon \in (\Z/2\Z)^{\partial T'}$. 
 The group $V_2[\mathbb{Z}_2]$ is an example of the Higman-Thompson groups $V_n[G]$, where $n\in \mathbb{N}$ and $G$ a subgroup of the symmetric group on $n$ elements \cite{FH,BDJ}. More concretely, it is associated to the subgroup $G=\Z_2$ of permutations of $n$ elements 
generated by the involution exchanging $j$ and  $n-j$, for all $j$. (We remark that $V = V_2[{\rm Id}]$ in this context.) 
Observe that we have an obvious inclusion $V<V_2[\mathbb{Z}_2]$. However, by a surprising result of Bleak-Donoven-Jonu\v{s}as \cite{BDJ} we have that, in fact, $V \cong V_2[\mathbb{Z}_2]$, via an explicit element of $\Homeo(C)$ (a {\em cellular automaton}). 

Finally, we stress that $V_2[\mathbb{Z}_2]$ also appeared in \cite{FNe} as the group of 
those homeomorphisms of a Cantor set embedded in $\mathbb{S}^2$ which extend to smooth diffeomorphisms of the sphere.

 Using the same reasoning as in the proof of Proposition \ref{prop:sesB}, we have: 

\begin{proposition}
For every $g\ge 0$, the restriction to the sequence \eqref{eq:SESgen} to $\Bh_g$ yields a short exact sequence \begin{equation}
1 \to \PMod_c(\Sigma_g) \to \Bh_g \to V_2[\mathbb{Z}_2] \to 1.
\label{eq:sesH}
\end{equation}
\end{proposition}





\section{Finite presentability}
\label{sec:proofT1}

In this section we prove Theorem \ref{thm:fg}. As mentioned in the introduction, the case $g=0$ was settled in \cite{FK}, and  will be a key ingredient in our proof:

\begin{theorem}[\cite{FK}]
The group $\Bc_0$ is finitely presented. 
\label{thm:FK}
\end{theorem}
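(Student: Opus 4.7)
The plan is to exploit the short exact sequence
$$1 \to \PMod_c(\Sigma_0) \to \Bc_0 \to V \to 1$$
given by Proposition~\ref{prop:sesB} (the surjectivity onto $V$ is established in its proof), combined with the classical result that Thompson's group $V$ is finitely presented, see~\cite{CFP}. The main obstruction is that the kernel $\PMod_c(\Sigma_0)=\varinjlim_Z \PMod(Z)$ is not finitely generated, so one cannot simply splice together presentations of kernel and quotient. The standard remedy is to apply K.~S.~Brown's presentation criterion to a suitable simply connected 2-complex on which $\Bc_0$ acts with finitely many orbits of cells in dimensions $\le 2$ and with finitely presented cell stabilizers.

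First I would construct the relevant 2-complex $\Pc$. Let $\mathcal{T}$ be the rooted trivalent tree dual to the pants decomposition $P$ underlying the rigid structure, so that $P$-suited subsurfaces of $\Sigma_0$ correspond bijectively to finite admissible subtrees of $\mathcal{T}$. Take the vertex set of $\Pc$ to be the collection of $P$-suited subsurfaces; join $Z$ to $Z'$ by an edge whenever $Z'$ is obtained from $Z$ by adjoining a single pair of pants of $\Sigma_0-P$ along one boundary curve (an \emph{elementary expansion}); and fill in a 2-cell along each commuting square formed by two elementary expansions performed on disjoint pairs of pants. Since the poset of $P$-suited subsurfaces under inclusion is directed (any two have a common upper bound), a nerve-type argument shows that $\Pc$ is contractible, and in particular simply connected.

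Next, I would verify the remaining hypotheses of Brown's criterion. The group $\Bc_0$ acts cellularly on $\Pc$ through its natural action on $P$-suited subsurfaces. Since $\Bc_0$ surjects onto $V$ and $V$ acts transitively on finite admissible subtrees of any given combinatorial type, the action of $\Bc_0$ on $\Pc$ admits finitely many orbits of cells in each dimension $\le 2$. The stabilizer in $\Bc_0$ of a vertex $Z\in\Pc$ fits into a short exact sequence of the schematic form
$$1 \to \Mod(Z) \to \Stab_{\Bc_0}(Z) \to \mathrm{Sym}_{\partial Z} \to 1,$$
where the symmetric group on the boundary components of $Z$ is finite and $\Mod(Z)$ is a finite-type mapping class group; hence $\Stab_{\Bc_0}(Z)$ is finitely presented. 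Edge and 2-cell stabilizers are analyzed similarly and shown to be finitely generated. Brown's theorem then delivers a finite presentation for $\Bc_0$.

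The main obstacle I anticipate is the verification of simple connectivity of $\Pc$. The directedness of the poset of $P$-suited subsurfaces makes contractibility of the \emph{order complex} transparent, but I have chosen 2-cells only over \emph{commuting} expansion squares, and so one must check that no further 1-cycles survive (in particular, those involving expansions whose pants share a boundary curve). The standard tool is a combinatorial Morse argument tracking the descending links at expansion vertices, in the spirit of the Stein--Farley analysis of Thompson-like groups; this Morse-theoretic step is the technical heart of the argument, but once it is in place the rest of the verification of Brown's criterion is routine bookkeeping.
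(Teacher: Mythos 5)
Your proposed complex is genuinely different from the one used in the paper's cited proof. Funar--Kapoudjian work with a \emph{reduced pants complex}: vertices are pants decompositions of $\Sigma_0$ differing from the reference decomposition $P$ in only finitely many curves, edges are Hatcher--Thurston elementary moves, and the 2-cells are a restricted set of triangles, squares and pentagons chosen precisely so that the $\Bc_0$-action becomes cocompact (the naive pants complex has infinitely many orbits of squares, as they observe). You instead build a Stein--Farley-type complex on the poset of $P$-suited subsurfaces under elementary expansion. That is a legitimate alternative strategy in spirit, but as written it has a real gap.

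The gap is in your cocompactness claim. The $\Bc_0$-orbit of a $P$-suited subsurface $Z$ is determined by the number of its boundary components (equivalently, the number of leaves of the corresponding admissible subtree of $\mathcal{T}$): $\Bc_0$ can carry $Z$ to any other $P$-suited subsurface with the same number of boundary curves, but it cannot change that number, since $Z$ is compact and a homeomorphism preserves $|\partial Z|$. Hence there are infinitely many $\Bc_0$-orbits of vertices in your $\Pc$ --- one for each $n\ge 1$ --- and likewise infinitely many orbits of edges and of commuting-square 2-cells. Your own sentence, ``$V$ acts transitively on finite admissible subtrees of any given combinatorial type,'' already concedes this: the type (the number of leaves) is an unbounded invariant. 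Consequently the version of Brown's criterion you invoke (Theorem~\ref{thm:brown}, which requires $X/G$ cocompact) simply does not apply to $\Pc$. To salvage the approach you would need the filtration form of Brown's finiteness criterion: filter $\Pc$ by the cocompact $\Bc_0$-subcomplexes $\Pc_{\le n}$ spanned by suited subsurfaces with at most $n$ boundary components, and then show via the descending-link/Morse analysis that the connectivity of the pairs $(\Pc_{\le n+1},\Pc_{\le n})$ tends to infinity. In your write-up the Morse argument is assigned only the task of verifying simple connectivity of $\Pc$; in fact it must carry the full weight of replacing cocompactness, which is a substantially larger obligation and needs to be set up explicitly before the conclusion can be drawn.
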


It will be useful for us to give a brief description of the arguments used  in \cite{FK} for proving Theorem \ref{thm:FK}. The first ingredient, which will also play a central r\^ole here, is the following well-known result of Brown \cite{Brown}:

\begin{theorem}[\cite{Brown}]
Let $G$ be a group acting on a connected and simply-connected two-dimensional CW-complex $X$ by permuting its cells. Suppose that:
\begin{enumerate}
\item The stabilizer of each vertex of $X$ is finitely presented;
\item The stabilizer of every edge of $X$ is finitely generated;
\item $X/G$ is compact.
\end{enumerate}
Then $G$ is finitely presented. 
\label{thm:brown}
\end{theorem}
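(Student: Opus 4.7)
The plan is to exhibit an explicit finite presentation for $G$ by using the action on $X$ to produce both generators and relators, then verify completeness via a van Kampen-style argument that exploits simple connectivity. Only the 2-skeleton of $X$ is relevant for this, so I would first reduce to the case where $X$ is 2-dimensional, observing that neither the hypotheses nor the conclusion are affected by discarding cells of dimension $\ge 3$.

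Next, I would use cocompactness to choose a finite connected subcomplex $K \subseteq X$ such that $G \cdot K = X$ and $K$ meets every $G$-orbit of cells. (If a raw fundamental domain is disconnected, enlarge it by adjoining finitely many edge paths in $X$ joining the components; this uses connectedness of $X$ together with the fact that $X/G$ has finitely many cells.) From $K$ I would read off a finite generating set $S$ consisting of: (i) for each vertex $v$ of $K$, a finite generating set of the vertex stabilizer $G_v$, furnished by hypothesis (1); and (ii) for each edge $e$ of $K$ having one endpoint $v \in K$ and the other endpoint outside $K$ (or more generally, for each edge of $X$ incident to $K$ whose other endpoint is in a $G$-translate of $K$), a ``translation'' element $t_e \in G$ carrying one endpoint into the $G$-orbit representative in $K$. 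Finiteness of $K$ guarantees that $S$ is finite.

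The relators $R$ fall into three families: (a) the finitely many defining relators of each vertex stabilizer $G_v$ for $v \in K^{(0)}$; (b) edge compatibility relations — for each edge $e \subset K$ with stabilizer $G_e$, which is finitely generated by hypothesis (2), one relation for each generator $h \in G_e$ asserting that $h$, viewed in the stabilizer of one endpoint, equals its conjugate by $t_e$ viewed in the stabilizer of the other endpoint; (c) one relation per 2-cell of $K$, expressing that the boundary word (written as a product of vertex-stabilizer generators and translation elements $t_e$) is trivial in $G$. All three families are finite, so $\langle S \mid R \rangle$ is a finite presentation.

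The main obstacle — and the only nontrivial step — is verifying that $\langle S \mid R \rangle$ really presents $G$. Surjectivity of $S$ onto a generating set of $G$ follows because $G \cdot K = X$ and $X$ is connected, so any $g \in G$ may be realized by tracking the image of a basepoint along an edge-path in $X^{(1)}$, each edge of which contributes a generator from $S$. For completeness of the relations, I would adapt the classical van Kampen argument to the equivariant setting: a word $w \in F(S)$ representing the identity in $G$ corresponds to a closed edge-loop $\gamma$ in $X^{(1)}$; by simple connectivity of $X$, $\gamma$ bounds a singular disk which can be replaced by a cellular disk tiled by $G$-translates of the 2-cells of $K$; each such tile contributes a conjugate of a type-(c) relator, while matching along shared edges is handled by type-(b) relators and internal rewriting within vertex stabilizers by type-(a) relators. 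The equivariant reduction to finitely many relator orbits is exactly what makes finite generation of edge stabilizers (hypothesis (2)) indispensable, since without it the type-(b) relations would form an infinite family per edge orbit. This van Kampen step is the heart of the argument and the place where all three hypotheses are simultaneously used.
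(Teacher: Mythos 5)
The paper does not prove this statement: it is imported verbatim from Brown's work (the citation \cite{Brown}) and used as a black box, so there is no in-paper argument to compare yours against. On its own terms, your sketch is a correct outline of the standard proof of Brown's criterion --- reduction to the $2$-skeleton, a finite connected fundamental region $K$, generators drawn from vertex stabilizers together with edge-translation elements, the three finite families of relators, and a van Kampen/disk-diagram argument in which simple connectivity supplies a cellular filling of any null-homotopic edge loop and hypotheses (1), (2), (3) respectively bound the vertex, edge, and $2$-cell contributions. This is essentially the complex-of-groups argument one finds in Brown's original treatment and in Bridson--Haefliger. Two technical points you gloss over but should acknowledge: first, one must arrange that the action is without inversions (no element stabilizes an edge while swapping its endpoints), which is harmless after passing to the barycentric subdivision but is needed for the type-(b) relations to make sense as stated; second, the bookkeeping that converts a tiled disk diagram into a product of conjugates of relators requires care at vertices of $K$ lying in the same $G$-orbit (one either fixes orbit representatives and adds conjugating elements to $S$, or adds finitely many identification relations), and the relations $t_{\bar e}=t_e^{-1}$ for reversed edges should be included. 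Neither point threatens the argument; they are standard adjustments, and the heart of your proposal --- that simple connectivity plus cocompactness reduces everything to finitely many orbit contributions --- is exactly right.
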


In \cite{FK}, Funar-Kapoudjian applied Theorem \ref{thm:brown} to the action of $\Bc_0$ on  a modification of the so-called {\em pants complex} \cite{HT}  of $\Sigma_0$. More concretely, they first consider a graph $\mathcal P_0$ whose vertices are (isotopy classes of) pants decompositions of $\Sigma_0$ which differ from the pants decomposition underlying the rigid structure in a finite number of curves.
An edge of $\mathcal{P}_0$ is given by two pants decompositions that are related by an {\em elementary move}, meaning that they differ in exactly two curves, which intersect exactly once (resp. twice) if their union fills a one-holed torus (resp. a four-holed sphere); see \cite{HT} for details. 

Using the same proof of the main result of Hatcher-Thurston \cite{HT}, one deduces that the graph $\mathcal{P}_0$ is connected, and  that it becomes a  simply-connected 2-complex after gluing a 2-cell to every {\em triangle}, {\em square}, and {\em pentagon} of $\mathcal{P}_0$. 
However, it turns out that the action of $\Bc_0$ on $\Pc_0$ is not cocompact, as there are infinitely many $\Bc_0$-orbits of squares \cite[Proposition 5.4]{FK}; in particular, Theorem \ref{thm:brown} cannot be applied to this situation. In order to overcome this, Funar-Kapoudjian construct a modification of $\mathcal{P}_0$, called the {\em reduced} pants complex, by considering only two combinatorial types of squares, and show that $\mathcal P_0$ is still simply connected. They then prove that $\Bc_0$ acts on $\mathcal P_0$ satisfying all the hypotheses of Theorem \ref{thm:brown}, as desired. 

In fact, a minor variation of the arguments in \cite{FK} serves to prove the following strengthening of Theorem \ref{thm:FK}, which will be crucial for us:

\begin{proposition}
For every $b\in \N\cup \{0\}$, the groups $\Bc_0^b$ and $\Bh_0^b$ are finitely presented.  
\label{prop:extFK2}
\end{proposition}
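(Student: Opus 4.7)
The plan is to adapt the proof of Theorem \ref{thm:FK} in \cite{FK} to the punctured setting by applying Brown's criterion (Theorem \ref{thm:brown}) to the action of $\Bc_0^b$ on an appropriately modified reduced pants complex. Recall from \ref{subsec:boundary} that the rigid structure on $\Sigma_0^b$ is chosen so that all $b$ isolated punctures lie in a single pair of pants $Y_0$ of the underlying pants decomposition $P$. This localization is the key technical ingredient which will allow the combinatorics of \cite{FK} to carry over almost verbatim.

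First, I would construct a $2$-dimensional CW-complex $\mathcal{P}_0^b$ whose $1$-skeleton is the \emph{reduced pants graph} of $\Sigma_0^b$: vertices are pants decompositions of $\Sigma_0^b$ which agree with $P$ outside some compact $P$-suited subsurface, and for which all $b$ isolated punctures still lie in $Y_0$; edges correspond to Hatcher-Thurston elementary moves supported outside $Y_0$. I would then glue $2$-cells along the triangles, pentagons, and the two combinatorial types of squares considered in \cite[Section 5]{FK}. Since $Y_0$ is preserved in every vertex, the complement of $Y_0$ in $\Sigma_0^b$ is homeomorphic to the corresponding piece of $\Sigma_0$, so the simple connectivity of $\mathcal{P}_0^b$ follows from the Hatcher-Thurston argument applied to exhausting $P$-suited subsurfaces, exactly as in \cite{FK,HT}.

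Next, I would verify the three hypotheses of Theorem \ref{thm:brown} for the natural action of $\Bc_0^b$ on $\mathcal{P}_0^b$. Cocompactness is established as in \cite[Proposition 5.4]{FK}: the restriction to two combinatorial types of squares kills the infinitely many square-orbits of the full pants complex, and the only additional combinatorial datum carried by a vertex, namely $Y_0$, is preserved by every asymptotically rigid homeomorphism. The stabilizer of a vertex $v$ fits into a short exact sequence whose kernel is the pure mapping class group of a $P$-suited compact subsurface $Z_v$ carrying the support of the vertex (a finite-type surface, so finitely presented by standard results), and whose quotient is a finite group accounting for permutations of the boundary components of $Z_v$ and of the $b$ punctures inside $Y_0$; hence vertex stabilizers are finitely presented. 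Edge stabilizers are finite-index subgroups of vertex stabilizers, and are therefore also finitely presented, in particular finitely generated. An application of Brown's criterion then yields that $\Bc_0^b$ is finitely presented.

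The main obstacle is ensuring that the $b$ extra isolated punctures do not break the finite combinatorial reduction carried out in \cite{FK}. The resolution is precisely the choice of rigid structure made in \ref{subsec:boundary}: because all $b$ punctures are confined to a single pair of pants $Y_0$ that is preserved (setwise) by every element of $\Bc_0^b$, the punctures contribute only a finite symmetric-group factor to each stabilizer and add no new orbits of cells. With this observation in hand, every step in the argument of \cite{FK} goes through with only cosmetic changes, delivering the claimed finite presentation.
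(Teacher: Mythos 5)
Your overall strategy matches the paper's: apply Brown's criterion to the action of $\Bc_0^b$ on the reduced pants complex of $\Sigma_0^b$, using the choice of rigid structure from \ref{subsec:boundary} that confines the $b$ isolated punctures to a single pair of pants. The cocompactness and simple-connectivity steps are handled essentially as you describe.

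However, your description of the vertex stabilizers contains a genuine error. You assert that the stabilizer of a vertex $v$ (a pants decomposition) fits in a short exact sequence whose \emph{kernel} is $\PMod(Z_v)$ for a $P$-suited compact subsurface $Z_v$. This cannot be right: $\PMod(Z_v)$ is not even a subgroup of $\Stab(v)$, since a generic element of $\PMod(Z_v)$ (for instance a partial pseudo-Anosov supported in $Z_v$) does not preserve the multicurve $v$. The correct structure, which is what the paper uses, is that $\Stab(v)$ is an extension of a finite permutation group by a \emph{finitely generated free abelian group}: an asymptotically rigid mapping class fixing each curve of $v$ and each complementary pair of pants is, outside a compact support, forced to be rigid, and inside the support it is a product of Dehn twists along the finitely many curves of $v$ in the support. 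Your claim that edge stabilizers are finite-index subgroups of vertex stabilizers is likewise unjustified in general; what one actually checks is that edge stabilizers have the same (finite-by-abelian) structure and are therefore finitely generated. Both groups you describe are finitely presented, so the final conclusion is unaffected, but as stated the argument proves finite presentation of the stabilizers from a group that is not the stabilizer's kernel, and a referee would reject this step. Replacing ``$\PMod(Z_v)$'' by the free abelian group of Dehn twists along curves of $v$ in the support repairs the proof and aligns it with the paper's.
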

\begin{proof}[Sketch proof]
The definition of the reduced pants complex  from \cite{FK} makes sense also for $\Sigma_0^b$, with arbitrary $b$. By the same arguments as in \cite{HT}, this complex is connected and simply-connected and, using the same reasoning as in \cite{FK}, the groups $\Bc_0^b$ and $\Bh_0^b$ each act cocompactly on it. 
The stabilizer of every cell is  an extension of a finite permutation group by a finitely generated 
free-abelian group, and in particular finitely presented. At this point, the result follows from Theorem \ref{thm:brown}. 
\end{proof}
 
We now turn to the proof of Theorem \ref{thm:fg}, whose statement we now recall: 

\begin{named}{Theorem \ref{thm:fg}}
The groups  $\Bc_g$ and $\Bh_g$ are finitely presented for every $g\ge 0$. 
\end{named}

In light of Theorem \ref{thm:FK}, it suffices to show the result for $g$ positive. In order to do so, we will also use Brown's Theorem \ref{thm:brown}, this time using the action of $\Bc_g$ (resp. $\Bh_g$) on the so-called {\em cut-system complex} $\mathcal{K}_g$ of $\Sigma_g$. We remark that cut-system complexes of finite-type surfaces were in fact used by Hatcher-Thurston \cite{HT} and Wajnryb \cite{Waj} in order to compute finite presentations of mapping class groups. 

We now introduce the complex $\mathcal{K}_g$. For concreteness, we choose to write the definitions for the surface $\Sigma_g$, although they make sense for an arbitrary connected orientable surface, of finite or infinite type. 

   A {\em cut system} of $\Sigma_g$ is a set of $g$ non-separating curves  whose union does not separate $\Sigma_g$. Let $\mathcal{K}_g$ be the simplicial graph whose vertices are cut systems on $\Sigma_g$, and where two cut systems are adjacent in $\mathcal{K}_g$ if they differ in exactly two curves, which intersect exactly once. 

Moreover, similar to the case of the pants complex, $\mathcal{K}_g$ will become a simply-connected 2-complex after gluing a 2-cell to certain circuits in $\mathcal{K}_g$. Before explaining this, we need to borrow  some definitions from \cite{HT,Waj}.  A {\em triangle} in $\mathcal{K}_g$ consists of three pairwise-adjacent vertices of $\mathcal{K}_g$; geometrically, a triangle corresponds to three  curves that pairwise intersect exactly once.  A {\em square} in $\mathcal{K}_g$ is a closed path with four vertices $v_1, \ldots,v_4$ such that $v_i$ and $v_j$ are adjacent in $\mathcal{K}_g$ if and only if $|i-j| =1 \mod 4$; geometrically, a square corresponds to two elementary moves that occur in two disjoint one-holed tori. Finally, a {\em pentagon} in $\mathcal{K}_g$ consists of five vertices  $v_1, \ldots,v_5$ such that $v_i$ and $v_j$ are adjacent in $\mathcal{K}_g$ if and only if $|i-j| =1 \mod 5$; geometrically, a pentagon is determined by five curves $c_1, \ldots, c_5$ on $\Sigma_g$ such that both $c_i$ and $c_{i+1}$ belong to the cut system  $v_i$ and $i(c_i,c_{i+2}) =1$, counting indices modulo 5. 

Armed with these definitions, we construct a 2-complex by gluing a 2-cell to every triangle, square, and pentagon of $\mathcal{K}_g$. By a slight abuse of notation, we will denote the resulting complex by $\mathcal{K}_g$ also. The following result is essentially due to Hatcher-Thurston \cite{HT} and Wajnryb \cite{Waj}:

\begin{theorem}[\cite{HT,Waj}]
For every $g\ge 1$, the complex $\mathcal{K}_g$ is connected and simply-connected. 
\label{thm:HT}
\end{theorem}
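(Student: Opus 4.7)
This is the cut-system analogue of the classical Hatcher--Thurston theorem, proved for compact surfaces in \cite{HT} and given with the refined 2-cell set (triangles, squares, pentagons) by Wajnryb \cite{Waj}. The plan is to reduce the statement on the infinite-type surface $\Sigma_g$ to the known theorem for finite-type surfaces, exploiting the fact that cut systems have finite support.

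The reduction proceeds as follows. Since a cut system on $\Sigma_g$ consists of only $g$ non-separating curves, any finite subcomplex of $\mathcal{K}_g$ is contained in a compact genus-$g$ subsurface $S \subset \Sigma_g$ whose boundary is a finite collection of curves in $P$. Capping each boundary component of $S$ by a once-punctured disc produces a finite-type surface $S_{g,n}$ whose cut-system complex admits a natural simplicial embedding into $\mathcal{K}_g$. Moreover, this embedding sends triangles, squares, and pentagons of $\mathcal{K}(S_{g,n})$ to triangles, squares, and pentagons of $\mathcal{K}_g$, since all three types are characterized purely by intersection numbers of the curves involved, which are preserved under the inclusion $S \hookrightarrow \Sigma_g$.

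For connectedness, given two cut systems $C, C'$, choose $S$ large enough to contain representatives of both and apply the Hatcher--Thurston theorem to $S_{g,n}$ to produce a path of elementary moves joining $C$ to $C'$, then transport it to $\mathcal{K}_g$. For simple-connectedness, any loop $\gamma$ in $\mathcal{K}_g$ is a finite concatenation of edges, hence supported in some compact subsurface $S$; by \cite{Waj}, $\gamma$ bounds a disc in $\mathcal{K}(S_{g,n})$ built from triangles, squares, and pentagons, which then fills $\gamma$ in $\mathcal{K}_g$.

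The main technical point to verify will be that the Hatcher--Thurston--Wajnryb analysis works uniformly in the number $n$ of boundary components, so that increasing $n$ introduces no new 2-cell types in the cut-system complex. This uniformity is standard (the analysis is local, happening in one-holed tori and four-holed spheres in the complement of a cut system), and no essentially new argument is required beyond what is already in \cite{HT,Waj}.
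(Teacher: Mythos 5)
Your proposal takes essentially the same approach as the paper: the paper's proof (which is a two-sentence sketch) also reduces to the finite-type Hatcher--Thurston--Wajnryb theorem by noting that any finite collection of vertices of $\mathcal{K}_g$ is supported in a compact finite-type subsurface. Your version simply spells out the details --- the passage to a compact genus-$g$ subsurface, the preservation of triangles/squares/pentagons under inclusion, and the uniformity in $n$ --- that the paper leaves implicit, and it is correct.
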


\begin{proof}
First, Hatcher-Thurston \cite{HT} and Wajnryb \cite{Waj} proved that the cut-system complex of a finite-type surface is connected and simply-connected. To see that this is also the case for $\mathcal{K}_g$ observe that, for every finite set of vertices $A \subset \mathcal{K}_g$, the union of the curves defining the elements of $A$ together fill a finite-type subsurface of $\Sigma_g$. 
\end{proof}

We are finally in a position to prove Theorem \ref{thm:fg}: 

\begin{proof}[Proof of Theorem \ref{thm:fg}] We prove the result for $\Bc_g$; the same argument, replacing every instance of $\Bc$ by $\Bh$, will give the result for $\Bh_g$. 
As mentioned earlier, we are going to apply Theorem \ref{thm:brown} to the action of $\Bc_g$ on $\mathcal{K}_g$. 

First of all, Theorem \ref{thm:HT} tells us that $\mathcal{K}_g$ is connected and simply-connected. Now, the classification theorem for infinite-type surfaces \cite{Richards} and the fact that vertices are defined by a finite set of curves, together imply that $\Bc_g$ acts transitively on the set of vertices of $\mathcal{K}_g$. Since edges and 2-cells of $\mathcal{K}_g$ are defined in terms of intersection numbers, and by a finite set of curves, we deduce that $\Bc_g$ acts cocompactly on $\mathcal{K}_g$. 

Thus, it remains to justify why the stabiliser of a vertex (resp. edge) is finitely presented (resp. finitely generated). Consider first the case of the stabiliser of a vertex $u$  of $\mathcal K_g$. Fix an orientation on each curve of $u$. Up to passing to a subgroup of finite index, and abusing notation, we may assume that every element of $\Stab(u)$ fixes every curve of $u$ together with its orientation. Now, cutting $\Sigma_g$ open along the elements of $u$ we obtain a surface homeomorphic to $\Sigma_0^{2g}$, and a short exact sequence \[1\to \Z^g \to \Stab(u) \to \Bc_0^{2g}\to 1,\] where $\Z^g$ is the group generated by the Dehn twists along the elements of $u$. Now, an extension of a finitely presented group by a finitely presented group is also 
finitely presented and thus Proposition  \ref{prop:extFK2} yields that (a finite-index subgroup of) $\Stab(u)$ is finitely presented, as desired. 

Observe that the surface obtained from $\Sigma_g$ by cutting the surface along the curves defining an edge of $\mathcal{K}_g$ is homeomorphic to $\Sigma_0^{2g-1}$. In light of this, as above we deduce that (a finite-index subgroup of) the stabiliser of an edge $e\in \mathcal{K}_g$ fits in a short exact sequence 
\[1\to \Z^{g-1} \to \Stab(e) \to \Bc^{2g-1}_0 \to 1.\]  
Therefore, the stabiliser of an edge is finitely presented, and in particular finitely generated. 
This finishes the proof of Theorem \ref{thm:fg}.
\end{proof} 

 
\section{The group of half-twists is dense}
\label{sec:dense}
The goal of this final section is to prove Theorem \ref{thm:limit}, whose statement we now recall: 

\begin{named}{Theorem \ref{thm:limit}}
 $\Bh_g$ is dense in $\Mod(\Sigma_g)$. 
\end{named}

\begin{remark} In light of the result of Neretin and the second author \cite{FNe} mentioned above, when $g=0$ this result may be interpreted as stating that homeomorphisms of the sphere minus a Cantor set may be approximated by diffeomorphisms. 
\end{remark}

As it turns out, Theorem \ref{thm:limit} will be a consequence of a slightly stronger result, namely Theorem \ref{thm:limit2} below. Before we state it, we need some preliminaries. Fix, once and for all, a rigid structure on $\Sigma_g$. Recall from section \ref{sec:rigid} that the rigid structure is given by a separating curve on $\Sigma_g$, which will be denoted  $c(\emptyset)$  for reasons that will become apparent below, plus a rigid structure on the planar component $\Sigma_g^* $ of $\Sigma_g - c(\emptyset)$. Let $P$ be the pants decomposition underlying the rigid structure on $\Sigma_g^* $. 

Similar to the situation  in subsection \ref{subsec:V}, there is an infinite rooted tree $\mathcal T'$ associated to $P$, whose vertices are the curves of $P$, with the root being $c(\emptyset)$. As such, every vertex of $\mathcal T'$  is naturally labelled by a word $w$ in the free semigroup $F(L,R)$ generated by the two letters $L$ (left) and $R$ (right). We denote by $c(w)$ the curve of $P$ labelled by the word $w\in F(L,R)$, and write $h(w)$ for the half-twist about $c(w)$.

In turn, this labelling induces a labelling of the set of pairs of pants of $\Sigma_g^* $ by words in $F(L,R)$. Indeed, we set $P(w)$ to be the unique $P$-suited pair of pants of $\Sigma_g^* $ that has $c(w)$ as boundary component and is contained in the planar component of $\Sigma_g  - c(w)$.

Finally, set $S(-\infty)$ to be a fixed compact genus-$g$ subsurface of $\Sigma_g$ which contains $c(\emptyset)$. In addition, for each $w \in F(L,R)$, let $S(w)$ be a $P$-suited (therefore compact) subsurface whose interior is contained in $\Sigma_g^* - c(w)$, and which has $c(w)$ as a boundary component. Observe that the choice is far from unique; however, for any such choice, the set \[\{S(w) \mid w\in F(L,R) \cup \{-\infty\} \}\] is {\em proper}, in the sense that every compact subsurface of $\Sigma_g$ intersects only finitely  many elements of this set. Because of this, we will refer to any set $\{S(w) \mid w\in F(L,R) \cup \{-\infty\}\}$ as above as a 
{\em proper exhaustion by subsurfaces}. 

After all this discussion, we have the following definition: 

\begin{definition}[Proper sequence of mapping classes]
A {\em proper sequence of mapping classes}  is a sequence $\{f_n\}_{n\in \N} \subset \Mod(\Sigma_g)$, where 
\[f_n = \prod_{\begin{subarray}{c} w\in F(L,R) \cup \{-\infty\} \\ |w| \le n \end{subarray}} f(S(w)) h(w)^{\epsilon_w},\] where: 
\begin{enumerate}
\item $ f(S(w))$ is an element of $\PMod(S(w))$,
\item $|w|$ denotes the length of the word $w$,
\item $\epsilon_w \in\{-1,0,1\}$ for all $w$,
\item The product is ordered lexicographically, and is defined from left to right.
\end{enumerate}
\end{definition}
Observe that if $\{f_n\}_{n\in \N} \subset \Mod(\Sigma_g)$ is a proper sequence, then $\{f_n\}_{n\in \N} \subset \Bh_g$ as well. In addition, since the defining subsurfaces of the $f_n$ form a proper exhaustion of subsurfaces of $\Sigma_g$, we have:

\begin{lemma}
If $\{f_n\}_{n\in \N} \subset \Bh_g$ is a proper sequence, then it has a limit in $\Mod(\Sigma_g)$. 
\label{lem:proper}
\end{lemma}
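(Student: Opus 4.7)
The plan is to show that $\{f_n\}$ converges in $\Homeo(\Sigma_g)$ equipped with the compact-open topology; continuity of the quotient map $\Homeo(\Sigma_g)\to\Mod(\Sigma_g)$ then yields the desired limit. The key statement is that for every compact $K\subset \Sigma_g$, the restriction $f_n|_K$ is eventually constant.

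Write $g_w := f(S(w))\, h(w)^{\epsilon_w}$ and pick a representative supported in a compact set $U(w)$, namely $S(w)$ together with a thin regular annular neighborhood of $c(w)$ to accommodate the half-twist. By properness of $\{S(w)\}$, the collection $\{U(w)\}$ is locally finite in $\Sigma_g$. The central step is to establish the factorization
\[
f_{n+1}=f_n\cdot h_{n+1},\qquad h_{n+1}:=\prod_{|w|=n+1}g_w,
\]
the product on the right being taken in any order. The justification comes from the tree structure of the rigid pants decomposition: for distinct words $w,w'$ of length $n+1$, neither is a prefix of the other, so $c(w)$ and $c(w')$ lie in different branches of the tree and the supports $U(w),U(w')$ can be arranged disjoint; hence $g_w,g_{w'}$ commute. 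The factorization itself follows by commuting each length-$(n+1)$ factor $g_w$ rightward past every length-$\le n$ factor $g_{w''}$ that follows it in lex order: one has $w<w''$ with $|w|>|w''|$, so $w$ cannot be a prefix of $w''$, while $w''$ being a prefix of $w$ would reverse the inequality; hence neither is a prefix of the other, the curves $c(w),c(w'')$ again sit in distinct branches of the tree, the supports can be chosen disjoint, and the factors commute.

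Given the factorization, set $M(K):=\max\{|w|:U(w)\cap K\neq\emptyset\}$, which is finite by local finiteness. For $n\ge M(K)$, every factor of $h_{n+1}$ has support disjoint from $K$, so $h_{n+1}|_K=\mathrm{id}_K$ and hence $f_{n+1}|_K=f_n|_K$. The pointwise limit $f_\infty:=\lim_n f_n$ is therefore well-defined and continuous, with $f_\infty|_K=f_{M(K)}|_K$ a homeomorphism onto its image. An analogous argument applied to $\{f_n^{-1}\}$, using $f_{n+1}^{-1}=h_{n+1}^{-1}\cdot f_n^{-1}$ together with local finiteness to control the orbit of any fixed compact under the partial inverses, produces a continuous inverse for $f_\infty$; therefore $f_\infty$ is a homeomorphism and $[f_n]\to[f_\infty]$ in $\Mod(\Sigma_g)$.

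I expect the main obstacle to be the factorization $f_{n+1}=f_n\cdot h_{n+1}$: while the combinatorics of the lex ordering make the statement natural, one must verify that the annular neighborhoods carrying the half-twists can be chosen so as not to spoil the disjointness of supports used in the commutations. The unavoidable overlap occurs only between parent-child $U(w)$'s in the tree, which are never among the pairs commuted above; the subtler point is verifying the corresponding stabilization for the inverse sequence, which requires pinning down where the backward partial products send a fixed compact neighborhood of a given point.
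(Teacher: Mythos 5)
The paper gives no proof of this lemma at all; it is stated as an immediate consequence of the properness of the family $\{S(w)\}$. So there is no argument in the paper to compare against, and your attempt has to be judged on its own merits.

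Your forward argument is sound. The factorization $f_{n+1}=f_n\cdot h_{n+1}$ does follow from the commutation you describe: a length-$(n+1)$ word and a shorter (or equal-length) word that is lex-larger are never in a prefix relation, so the corresponding curves lie in disjoint subtrees, and $S(w)\cup A(w)$ and $S(w')\cup A(w')$ can be taken disjoint by thinning the twist annuli. With this, $f_n|_K$ stabilizes for every compact $K$ once $n$ exceeds the largest $|w|$ with $U(w)\cap K\neq\emptyset$, and the pointwise limit $f_\infty$ is a continuous, injective, open map.

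The gap is in the inverse step, and it is a real one, not merely a detail to be "pinned down." You write $f_{n+1}^{-1}=h_{n+1}^{-1}\cdot f_n^{-1}$ and say local finiteness controls the orbit of a compact. But here $h_{n+1}^{-1}$ is applied \emph{after} $f_n^{-1}$, so the relevant question is whether $f_n^{-1}(K)$ is disjoint from $\operatorname{supp}(h_{n+1})$ for $n$ large, i.e.\ whether $\{f_n^{-1}(K)\}_n$ remains in a fixed compact. Local finiteness of $\{U(w)\}$ tells you where the supports are, not where $f_n^{-1}$ sends $K$. In fact the factors $g_w=f(S(w))\,h(w)^{\epsilon_w}$ have $f(S(w))\in\PMod(S(w))$ completely arbitrary, and $S(w)$ may reach several levels deeper than $c(w)$; nothing you have used prevents $f(S(w))^{-1}$ from pushing a point from near $c(w)$ to near a much deeper boundary component of $S(w)$, so that the backward partial products drag $K$ outward level after level and never stabilize. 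Equivalently, your argument shows $f_\infty$ is an open embedding but does not show its image is all of $\Sigma_g$; that is precisely what is needed to get an element of $\Mod(\Sigma_g)$, and it does not follow "analogously." You would need a genuinely different input here — for instance, an argument bounding $f_n(\operatorname{supp}(h_{n+1}))$ away from any fixed compact, or some quantitative control coming from the $P$-suitedness and the tree structure — and as written the proposal does not supply it.
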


 Recall that $\Mod(\Sigma_g)$ his equipped with the quotient topology coming from the compact-open topology on $\Homeo(\Sigma_g)$. We will prove: 

\begin{theorem}
Every element of $\Mod(\Sigma_g)$ is a limit of a proper sequence in $\Bh_g$. 
\label{thm:limit2}
\end{theorem}

Observe that  Theorem \ref{thm:limit2} obviously implies Theorem \ref{thm:limit}.

\begin{proof}[Proof of Theorem \ref{thm:limit2}]
Let $f \in \Mod(\Sigma_g)$ be an arbitrary mapping class. We will use an inductive argument, which we call a {\em straightening} of $f$, to produce a proper sequence of mapping classes whose limit is $f$. 

Consider $c(\emptyset)$. By the classification of infinite-type surfaces  \cite{Richards}, there exist a $P$-suited genus-$g$ subsurface $S(-\infty)$, and an element $f_{{\tiny{-\infty}}}$ such that $f_{{\tiny{-\infty}}}  f$ sends $c(\emptyset)$ to itself. We say that $f_{{\tiny{-\infty}}}  f$  {\em straightens}  $c(\emptyset)$.

Next, consider the curves $c(L)$ and $c(R)$. Again, there exists a $P$-suited planar subsurface $S(\emptyset)$ and an element $f_\emptyset\in\PMod(S(\emptyset))$ such that $f_\emptyset f_{-\infty}  f$ sends the set $\{c(L),c(R)\}$ to itself. Therefore, up to precomposing with the half-twist $h(\emptyset)$ about $c(\emptyset)$ if necessary, we may in fact assume that $c(L)$ (resp. $c(R)$) is sent to itself. We say that $f_\emptyset f_{-\infty}  f$ straightens the pair of pants $P(\emptyset)$.

We continue this process inductively to find, for all $n$, a proper exhaustion by (compact) $P$-suited subsurfaces $S(w)$ and elements $f(S(w)) \in \PMod(S(w))$ such that, setting \[f_n:=  \prod_{\begin{subarray}{c} w\in F(L,R) \cup \{-\infty\} \\ |w| \le n \end{subarray}} f(S(w)) h(w)^{\epsilon_w}, \] the mapping class 
\[(f_n f)_{n\in \N}\]  straightens every pair of pants $P(w)$ for $|w|\le n$. 

By Lemma \ref{lem:proper}, the sequence $(f_n)_{n\in \N}$ converges to an element $f^*\in \Mod(\Sigma_g)$, in such way that $f^* f$ fixes $c(\emptyset)$ and every curve of $P$. Moreover, by composing with an appropriate power of a Dehn twist at each step, we may assume that $f^* f$ induces the trivial twist on each curve of $P$. Therefore, $f^* f$ is an element of $\PMod(S({-\infty}))$. In other words, we have shown that, up to precomposing $f_\emptyset$ with the inverse of this element, the sequence $f_n$ converges to $f$ in $\Mod(\Sigma_g)$, as desired.
\end{proof}




\section{Stable homology} 
\label{sec:homology}
In this section, we adapt the methods of \cite{FK2} to prove Theorem \ref{thm:homology}.
 First, observe that for $n \ge 1$, gluing a pair of pants to a boundary component of $S_{g,n}$ gives rise to an injective homomorphism \[\PMod(S_{g,n}) \to \PMod(S_{g,n+1}).\]  In particular, one has a homomorphism 
\begin{equation}
H_k(\PMod(S_{g,n}),\Z) \to H_k(\PMod(S_{g,n+1}),\Z)
\label{eq:homology}
\end{equation}
between the corresponding homology groups, which has a one-sided inverse coming from the {\em capping homomorphism} \cite{FM} and is therefore injective.   In \cite{Harer}, Harer proved that the $k$-th homology group of $\PMod(S_{g,n})$ does not depend on $n$, provided $k$ is sufficiently small with respect to $g$. The current best bound for what ``sufficiently small" means is due to Boldsen \cite{Boldsen} who, building up on unpublished work of Harer \cite{Harer-unp}, proved the following: : 
 
\begin{theorem}[\cite{Harer,Boldsen}] Let $g,n,k  \in \Z$, with $g\ge 0$ and $n, k\ge 1$. Then, the homomorphism \eqref{eq:homology} is an isomorphism for every $k\le 2g/3$.
\label{thm:harer}
\end{theorem}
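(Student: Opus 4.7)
The plan is to apply the general homological stability machinery of Quillen (as developed by Harer and refined by Boldsen) to the action of $\PMod(S_{g,n+1})$ on a suitable arc complex. First I would introduce, for each $g$ and $n \ge 1$, the simplicial complex $\Ac(S_{g,n+1})$ whose $p$-simplices are isotopy classes of systems of $p+1$ pairwise disjoint, pairwise non-isotopic properly embedded arcs, each starting at a fixed point on a distinguished boundary component $\partial_0$ and ending on one of the other boundary components. The natural action of $\PMod(S_{g,n+1})$ permutes such arc systems, and the stabilizer of a generic $p$-simplex is, up to a finite group of symmetries of the arcs, identifiable with the pure mapping class group of the surface obtained by cutting $S_{g,n+1}$ along the arcs in the simplex — a surface of genus $g$ whose boundary components, after reindexing, place the stabilizer in a lower-complexity stage of the stabilization tower.

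The heart of the argument is a sharp connectivity estimate for $\Ac(S_{g,n+1})$: one needs that it is at least $\lfloor (2g-2)/3 \rfloor$-connected. This is precisely the step where Harer's original bound of order $g/3$ gets improved. Boldsen introduces an ordered, normal-form variant of the arc complex and performs a careful surgery-and-combing argument on piecewise-linear spheres to contract them up to the claimed dimension. I would import this connectivity bound as the main technical input rather than re-deriving it.

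With connectivity in hand, the action is fed into the equivariant homology spectral sequence
\[
E^1_{p,q} = \bigoplus_{\sigma \in \Ac_p / \PMod(S_{g,n+1})} H_q(\Stab(\sigma); \Z) \;\Longrightarrow\; H_{p+q}(\PMod(S_{g,n+1}); \Z),
\]
valid in total degree $p+q \le \lfloor (2g-2)/3 \rfloor$. The $d^1$-differentials assemble into a Koszul-type complex of stabilizers. Comparing this spectral sequence with the analogous one for $\PMod(S_{g,n})$ (obtained by filling the new boundary component with a disk and a marked point) and proceeding by a double induction on $g$ and $k$, one checks that the stabilization map induces an isomorphism on $H_k$ precisely in the range $k \le 2g/3$, and a surjection one step beyond.

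The main obstacle is the sharp connectivity estimate for the arc complex: once that is granted, the spectral-sequence bookkeeping follows a by-now standard Quillen--Harer--Wahl pattern. The subtlety behind Boldsen's improved coefficient $2/3$ is the refined contraction scheme using a prescribed ordering of arcs together with an induction on the number of ``bad'' arcs in a configuration, and reproducing that analysis is where virtually all of the delicate combinatorial work lives.
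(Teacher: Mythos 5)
The paper does not prove Theorem~\ref{thm:harer}: it is stated as a black-box citation of Harer's stability theorem with Boldsen's improved range, and no argument is given in the text. So there is no internal proof to compare your proposal against.

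That said, your sketch is a faithful high-level account of what actually happens in \cite{Harer,Boldsen}: one builds an arc complex with arcs emanating from a distinguished boundary component, proves a sharp connectivity bound on it (this is the genuinely hard input, and it is where Boldsen's normal-form/ordering refinement improves Harer's original slope), identifies simplex stabilizers with lower-complexity pure mapping class groups, and runs the equivariant spectral sequence together with a double induction to get stability in the stated range. One minor point of care: the precise connectivity constant and the exact stability range (isomorphism vs.\ surjection one degree higher, and whether the slope is stated as $2g/3$ or $(2g-2)/3$ or similar) depend on small normalization choices in the arc complex and in how one indexes the stabilization maps; these offsets are easy to get slightly wrong when reconstructing the argument from memory, and the authors of the present paper avoid all of them by citing the result. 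In the context of this paper, the appropriate ``proof'' of Theorem~\ref{thm:harer} is simply the citation, and your reconstruction is better viewed as background for the reader than as something that needs to be re-derived here.
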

  
At this point, Lemma \ref{lem:limit} and Theorem \ref{thm:harer} together imply that 
\[H_k(\PMod_c(\Sigma_g), \mathbb{Z}) \cong H_k(\PMod(S_{g,n}),\mathbb{Z})\] 
for all $n\ge 1$,  provided $k \le 2g/3$. For this reason, we will refer to the homology group $H_k(\Mod_c(\Sigma_g), \mathbb{Z}) $ as the {\em stable}  $k$-th homology group of the mapping class group of genus $g$. 

We are finally in a position to give a proof of Theorem \ref{thm:homology}, whose statement we now recall:

\begin{named}{Theorem \ref{thm:homology}}
Let $g\ge 1$. For every $k \le 2g/3$, $H_k(\Bc_{g}, \Z)$ (resp. $H_k(\Bh_{g}, \Z)$) is isomorphic to the $k$-th stable homology group of the mapping class group of genus $g$. 
\end{named}
 
\begin{proof} 
We adapt the proof of \cite[Theorem 3.1]{FK2} to our setting. We treat the case of $\Bc_g$ only, the other one being a direct translation. 
It is well-known that $\PMod(S_{g,n})$, where $n\geq 1$, is  torsion free and has a finite dimensional classifying space, which can be taken to be manifold (see e.g. \cite{Bo,Godin}). 
Thus their integral homology is of finite type and by Harer's Stability Theorem 
 $H_k(\PMod_c(\Sigma_g), \Z)$ is 
finitely generated for every $k$. 

Now, we apply the Lyndon-Hochschild-Serre spectral sequence method to
the short exact sequence \eqref{eq:sesB}, which reads 
\[1 \to \PMod_c(\Sigma_g) \to \Bc_g \to V \to 1.\] 
In other words, there is a spectral sequence approximating the homology of $\Bc_g$ whose second page is 
\[E^2_{pq}= H_p(V,H_q(\PMod_c(\Sigma_g),\Z)).\] 
We now claim  that $V$ acts trivially on $H_q(\PMod_c(\Sigma_g), \Z)$ for all $q$.
Accepting this claim for the moment, it follows that the only non-zero 
terms of the spectral sequence above for the rational homology are  those with $p=0$, because $V$ is acyclic (see \cite{SW,Brown2}). 
In particular, the spectral sequence for the homology of $\Bc_g$ collapses at the second page, which implies that 
\[H_q(\Bc_g, \Z)\cong H_q(\PMod_c(\Sigma_g),\Z)\] for all $q$, as desired. Thus it remains to show:

\medskip

\noindent{\bf{Claim.}} $V$ acts trivially on $H_q(\PMod_c(\Sigma_g), \Z)$. 

\begin{proof}[Proof of claim]
Since $H_k(\PMod_c(\Sigma_g), \Z) $ is a finitely generated abelian group, we get a finite-dimensional representation $V \to \GL(N, \mathbb Q)$. As $V$ is simple, it follows that this representation is either trivial or injective. On the other hand, every finitely-generated subgroup of $\GL(N, \mathbb{Q})$ is residually finite, while $V$ is not. In particular, the given representation is trivial.
\end{proof}
This finishes the proof of Theorem \ref{thm:homology}.
\end{proof}

\begin{proof}[Proof of Corollary \ref{cor:perfect}]
By the sequence \eqref{eq:sesB}, plus the fact that $\PMod_c(\Sigma_g)$ ($g\geq 3$)
and $V$ are both perfect, we obtain that $\Bc_g$ and $\Bh_g$ are perfect.  
\end{proof}



\section{Rigidity}
\label{sec:proofaut}

The goal of this section is to prove Theorem \ref{thm:autos}. 
As mentioned above $\Aut(G)$ will denote the automorphism group of the group $G$. 

\subsection{Automorphisms of $\PMod_c(\Sigma_g)$}
The first ingredient in the proof of Theorem \ref{thm:autos} is the following: 

\begin{proposition}
Suppose $g\ge 0$. Then $\Aut(\PMod_c(\Sigma_g))= \Mod(\Sigma_g)$. 
\label{prop:autos_pmod}
\end{proposition}

\begin{remark}
We note that, prior to this work, this result had been obtained by Patel-Vlamis \cite{PV} for $g\ge 4$. 
\end{remark}

Before explaining a proof of Proposition \ref{prop:autos_pmod}, we note the following easy observation:

\begin{lemma}
$\PMod_c(\Sigma_g)$ is a normal subgroup of $\Mod(\Sigma_g)$ for every $g\ge 0$.
\label{lem:pmod_normal}
\end{lemma}

\begin{proof}
Let $g\in \PMod_c(\Sigma_g)$  and let $Z$ be a  support for $g$. Consider an arbitrary $h \in \Mod(\Sigma_g)$, noting that any $P$-suited subsurface $W$ containing $h(Z)$ is a support for $hgh^{-1}$. Moreover, $hgh^{-1}$ induces the trivial permutation on the set of boundary components of $W$, and therefore $hgh^{-1}\in \PMod_c(\Sigma)$.
\end{proof}

In order to prove Proposition \ref{prop:autos_pmod}, we follow Ivanov's strategy for proving that automorphisms of (sufficiently complicated) finite-type mapping class groups are  conjugations. First, we observe:

\begin{lemma}
Every automorphism of $\PMod_c(\Sigma_g)$ sends  Dehn twists to Dehn twists. 
\label{lem:twists2twists}
\end{lemma} 

We stress that, if $4\le g <\infty$, then Lemma \ref{lem:twists2twists} quickly follows from \cite[Corollary 1.5]{AS} plus the fact that $\PMod_c(\Sigma_g)$ is a direct limit of $P$-suited subsurfaces of $\Sigma_g$ (Lemma \ref{lem:limit}). We now present a simpler argument, valid for arbitrary $g$, which was suggested to us by Juan Souto: 

\begin{proof}
Let $\phi \in \Aut(\PMod_c(\Sigma_g))$. We take an arbitrary $P$-suited subsurface $Z\subset \Sigma_g$ of genus $g$; in addition, we will assume that $Z$ has at least seven boundary components (note that the latter assumption is only relevant if $g<3$). Since $\Mod(Z)$ is finitely presented and $\PMod_c(\Sigma_g)$ is a direct limit of pure mapping class groups of $P$-suited subsurfaces of genus $g$, by Lemma \ref{lem:limit}, we deduce that there exists a $P$-suited subsurface $W\subset \Sigma_g$ such that $\phi(\Mod(Z)) < \Mod(W)$. Let $W^*$ be the (compact) subsurface of $W$ supporting $\phi(\Mod(Z))$; in particular, this implies  that $\phi(\Mod(Z)) < \Mod(W^*)$, and that $\Mod(W^*)$ and $\phi(\Mod(Z))$ have the same centralizer in $\PMod_c(\Sigma_g)$. 

Now, observe that, given a compact subsurface $Y\subset \Sigma_g$, the rank of the center of the centralizer of $\Mod(Y)$ in $\PMod_c(\Sigma_g)$ is equal to the number of boundary components of $Y$. In particular, since $\Mod(Z)$ and $\phi(\Mod(Z))$ are isomorphic, the discussion of the previous paragraph implies that $W^*$ and $Z$ have the same number of boundary components. Again, \cite{AS} or \cite{Castel} imply that they have equal genus, as $Z$ has genus $g$ and $\phi(\Mod(Z)) < \Mod(W^*)$. In other words,  $Z$ and $W^*$ are homeomorphic. We claim:

\medskip

\noindent{\em Claim.} The isomorphism $\phi: \Mod(Z) \to \Mod(W^*)$ is induced by a homeomorphism $Z\to W^*$. 

\medskip

\begin{proof}[Proof of Claim]
If $Z$ has genus $\ge  2$, then the claim is \cite[Theorem 9]{Castel}. We now sketch an argument valid for arbitrary genus. 

The center $C_Z$ of $\Mod(Z)$ is the free abelian group generated by the Dehn twists along the boundary components of $Z$. Observe that $\phi$ induces an isomorphism 
\begin{equation}
\Mod(Z) /C_Z \to \Mod(W^*)/C_{W^*},
\label{eq:homo}
\end{equation} and that $\Mod(Z) /C_Z$ is isomorphic to the pure mapping class group of the surface which results from gluing a punctured disk to every boundary component of $Z$. By the work of numerous authors (see, for instance, \cite[Theorem 1.3]{BeMa} for a comprehensive statement), the isomorphism  \eqref{eq:homo} is induced by a homeomorphism $f$ between the punctured surfaces. 
To see that $f$ extends to a homeomorphism $Z\to W^*$ inducing $\phi$, we use the well-known {\em lantern relation} (see, for instance, \cite[Section 5]{FM}). Indeed, since we are assuming that $Z$ has at least seven boundary components, given a boundary component $a \subset \partial Z$ we may find a collection $b_1,\ldots, b_6$ of six curves, all of them essential in $Z$, such that the Dehn twist along $a$ is expressed as a product of (suitable powers of) Dehn twists along the $b_i$'s, via the lantern relation. 
Thus the claim follows. 
\end{proof}

The claim above  implies that $\phi: \Mod(Z) \to \Mod(W^*)$ is induced by a homeomorphism $Z \to W^*$, and in particular sends Dehn twists to Dehn twists. 
The lemma now follows since $Z$ was arbitrary and every curve on $\Sigma_g$ is contained in some $P$-suited subsurface of genus $g$. 
\end{proof}

Continuing with the arguments towards a proof of Proposition \ref{prop:autos_pmod}, we next claim that every automorphism of $\PMod_c(\Sigma_g)$ induces an automorphism of the  
curve complex $\mathcal{C}(\Sigma_g)$; we refer the reader to the articles  \cite{HMV,BDR} for various features of these complexes in the infinite-type setting.

Indeed, let
 $\phi : \PMod_c(\Sigma_g) \to \PMod_c(\Sigma_g)$ be an automorphism. By Lemma \ref{lem:twists2twists}, given a curve $c \in \Sigma_g$ there exists a curve $d$ such that $\phi(T_c) = T_d$, where $T_a$ denotes the (left) Dehn twist about the curve $a$. Since $T_a = T_b$ if and only if $a = b$, we get that the curve $d$ above is in fact unique, and hence  $\phi$ induces a well-defined map $$\phi_* : \Cc(\Sigma_g) \to \Cc(\Sigma_g)$$ by the rule $\phi_*(c) = d$.
 Observe that the map $\phi_*$ is simplicial, since two Dehn twists commute if and only if the defining curves are disjoint. Moreover, it is bijective, with inverse the simplicial self-map of $\Cc(\Sigma_g)$ induced by $\phi^{-1}$. 

Now we need the following analogue of Ivanov's celebrated result \cite{Ivanov} on automorphisms of the curve complex:

\begin{theorem}[\cite{HMV, BDR}]\label{autcomplex}
 The automorphism group of $\Cc(\Sigma_g)$ is isomorphic to $\Mod(\Sigma_g).$
\label{thm:hmv}
\end{theorem}

We are finally in a position to prove Proposition \ref{prop:autos_pmod}: 

\begin{proof}[Proof of Proposition \ref{prop:autos_pmod}]
We wish to show that the natural homomorphism \[\Lambda: \Mod(\Sigma_g) \to \Aut(\PMod_c(\Sigma_g)) \] is an isomorphism. We first prove that $\Lambda$ is surjective. To this end, let 
 $\phi\in \Aut(\PMod_c(\Sigma_g))$, and consider the automorphism $\phi_* : \Cc(\Sigma_g) \to \Cc(\Sigma_g)$ induced by $\phi$. By Theorem \ref{thm:hmv}, there exists a unique $f \in \Mod(\Sigma_g)$ with $f(c) = \phi_*(c)$ for all $c \in \Cc(\Sigma_g)$. 
 Note that $\psi:=f^{-1}\phi$ is an element of $\Aut(\PMod_c(\Sigma_g))$, and that  
 $\psi_*(c) =c$ for all $c \in \Cc(\Sigma_g)$. 
 
Using again a direct translation of a classical argument in the finite-type setting, we will prove that the automorphism $\psi$ is trivial, so that $\phi$ is in fact the conjugation by $f$. 
To this end, let $h\in \Mod(\Sigma_g)$, and choose an arbitrary curve $c$ on $\Sigma_g$. We have: 
\[\psi(hT_c h^{-1}) = \psi(T_{h(c)}) = T_{\psi_*(h(c))}= T_{h(c)}.\]
On the other hand: 
\[\psi(hT_c h^{-1}) =\psi(h) \psi(T_c) \psi(h)^{-1} =  \psi(h) T_{\psi_*(c)} \psi(h)^{-1}= T_{\psi(h)(c)}\]
Combining both equations, we obtain that $h(c) = \psi(h)(c)$ for every curve $c$ on $\Sigma_g$.   
By Theorem \ref{autcomplex} we deduce that $h = \psi(h)$ for every $h\in \PMod_c(\Sigma_g)$. In other words,  $\psi$ is the identity on $\PMod_c(\Sigma_g)$, as desired. 
\end{proof}

\subsection{Proof of Theorem \ref{thm:autos}}
We now embark in the proof of Theorem \ref{thm:autos}.  The key ingredient is the following unpublished result of Kapoudjian: 

\begin{lemma}[Kapoudjian]\label{lemma-kapou}
Let  $f \in \Bc_g$. Then $f\in \PMod_c(\Sigma_g)$ if and only if for any finite set $\{f_i\}\subset \Bc_g$ there exists $h\in \Bc_g$ such that $[h,f_iff_i^{-1}]=1$ and
$f$ belongs to  the normal closure of $h$ in $\Bc_g$. 
\end{lemma}

\begin{proof}
Let $f\in \Bc_g$. Suppose first that $ f\in \PMod_c(\Sigma_g)$, and let $\{f_i\}\subset \Bc_g$ be an arbitrary finite set. Choose a minimal  support $Z$ for $f$, so that $f(Z)=Z$ and $f$ is identity outside $Z$.  
Observe that $f_i(Z)$ is a  support for $f_iff_i^{-1}$. Then we may take $h_0\in \Bc$ such that $h_0(Z)\cap f_i(Z)=\emptyset$ for all $i$, and set 
$h=h_0fh_0^{-1}$, which commutes with $f_iff_i^{-1}$. 

For the opposite direction let $Z$ be a defining surface for $f$; recall this means that $f$ sends $Z$ to $f(Z)$, and is rigid in the complement of $Z \cup f(Z)$. Up to enlarging $Z$ if necessary, we may assume that $Z$ has at
 least five boundary components. We now choose some finite set $\{f_i\}\subset \Bc_g$ as in the statement, and which furthermore satisfies that the $f_i$ all send $Z$ to itself, and that the group generated by the $f_i$ induces every possible permutation of the set of boundary components of $Z$.

Seeking a contradiction, suppose that $f\notin \PMod_c(\Sigma_g)$. Then there exists a connected component of $\Sigma_g-Z$ whose image by $f$ is disjoint from itself. 
This implies that every defining surface of any $h \in \Bc_g$ as in the statement  must be contained in $Z$. Furthermore, the permutation of the set of boundary components of $Z$ induced by $h$ must commute with every element of the conjugacy class of the (nontrivial) permutation induced by $f$. 

Now, we claim that if a permutation $\sigma$ in the symmetric group on $k\ge 5$ elements 
 commutes with every conjugate of a fixed nontrivial permutation $\sigma_0$, then $\sigma$ is the trivial permutation. Indeed, $\sigma$ commutes then with every element of the normal subgroup generated 
by $\sigma_0$; as $k\geq 5$ this normal subgroup contains the alternating group.  
Further, $\sigma$ cannot be even as the alternating group is center-free. 
It follows that $\sigma$ commutes with the even permutation $\tau\sigma$ and hence $\sigma$ commutes with $\tau$, for any transposition $\tau$. Then $\sigma$ is in the center of the symmetric group which is trivial.

In particular,  the permutation induced by $h$ is the trivial one, and $h \in \PMod_c(\Sigma_g)$. Hence $f$ belongs to the normalizer of $ \PMod_c(\Sigma_g)$ in $\Bc_g$, contradicting our assumption. 
\end{proof}

As an immediate consequence we get: 

\begin{corollary}
$\PMod_c(\Sigma_g)$ is a characteristic subgroup of $\Bc_g$.
\label{cor:charac}
\end{corollary}

\begin{remark}\label{H-kapou}
The statement of Lemma \ref{lem:pmod_normal} and the proof are still valid 
when we replace $\Bc_g$ by $\Bh_g$. 
This implies that $\PMod_c(\Sigma_g)$ is  characteristic in $\Bh_g$ also.
\end{remark} 

In  light of Corollary \ref{cor:charac}, every automorphism of $\Bc_g$ (resp. $\Bh_g$) induces an automorphism of $\PMod_c(\Sigma_g)$, which we have already determined in Proposition \ref{prop:autos_pmod}. Now, in order to calculate the automorphism group of $\Bc_g$ and $\Bh_g$ from this, we will make use of the following small technical result -- this is surely well-known, but we include a proof for completeness. 

\begin{lemma}
Let $H$ be a normal subgroup of a group $G$, and suppose it has trivial centralizer in $G$. Suppose $\psi: G\to G$ is an injective homomorphism of $G$ such that $\psi_{| H}= {\rm id}_{H}$. Then $\psi=  {\rm id}_{G}$. 
\label{lem:restriction}
\end{lemma}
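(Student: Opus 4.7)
The plan is a direct three-line computation exploiting normality of $H$ and the triviality of its centralizer; the hypothesis of injectivity of $\psi$ will turn out to be a red herring.

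First, I would fix an arbitrary element $g \in G$ and aim to show that $g^{-1}\psi(g)$ lies in the centralizer $C_G(H)$. To do this, pick any $h \in H$. By normality, $ghg^{-1}$ also belongs to $H$, so the hypothesis $\psi|_H = \mathrm{id}_H$ applies both to $h$ and to $ghg^{-1}$. Computing in two ways,
\[
ghg^{-1} = \psi(ghg^{-1}) = \psi(g)\psi(h)\psi(g)^{-1} = \psi(g)\, h\, \psi(g)^{-1}.
\]
Rearranging yields $(g^{-1}\psi(g))\, h\, (g^{-1}\psi(g))^{-1} = h$ for every $h \in H$, so $g^{-1}\psi(g) \in C_G(H)$.

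Since $C_G(H)$ is trivial by hypothesis, this forces $\psi(g) = g$. As $g \in G$ was arbitrary, $\psi = \mathrm{id}_G$, which is what we wanted.

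There is no real obstacle here: the only subtlety is recognizing that injectivity plays no role in the argument (it is the triviality of the centralizer that does all the work), and that one must remember to apply $\psi|_H = \mathrm{id}_H$ to the conjugate $ghg^{-1}$, not just to $h$.
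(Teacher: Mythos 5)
Your argument is correct and is essentially identical to the paper's: both compute $fhf^{-1}=\psi(f)h\psi(f)^{-1}$ for $h\in H$, deduce that $f^{-1}\psi(f)$ centralizes $H$, and conclude by triviality of $C_G(H)$. Your observation that injectivity of $\psi$ is not actually used is also correct; the paper's proof does not invoke it either.
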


\begin{proof}
Since $\psi$ is the identity on $H$ and $H$ is normal in $G$ we have that, for every $f\in G$ and every $h \in H$, $$fhf^{-1}= \psi(fhf^{-1}) = \psi(f) \psi(h) \psi(f^{-1}) =  \psi(f) h \psi(f^{-1}).$$ Since $h$ is arbitrary, it follows that $\psi(f^{-1})f$ belongs to the centralizer of $H$ in $G$, and therefore $\psi(f) = f$, by hypothesis. Since $f$ is also arbitrary, the result follows.
\end{proof}

The motivation for Lemma \ref{lem:restriction} is the following claim: 

\begin{lemma}
The centralizer of $\PMod_c(\Sigma_g)$ in $\Mod(\Sigma_g)$ is trivial. 
\label{lem:centralizer}
\end{lemma}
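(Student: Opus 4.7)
The plan is to show that a mapping class centralizing $\PMod_c(\Sigma_g)$ must fix every isotopy class of essential simple closed curve, and hence be trivial by the infinite-type Alexander method already invoked in this section.

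First I would observe that, for every essential simple closed curve $c$ on $\Sigma_g$, the Dehn twist $T_c$ belongs to $\PMod_c(\Sigma_g)$. Indeed, any curve $c$ is contained in some compact subsurface of $\Sigma_g$, so $T_c$ has compact support and, since it is supported on a compact set disjoint from neighborhoods of the ends, it acts trivially on $\Ends(\Sigma_g)$.

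Next, let $f \in \Mod(\Sigma_g)$ be an element centralizing $\PMod_c(\Sigma_g)$, and let $c$ be an arbitrary essential simple closed curve on $\Sigma_g$. Since $f$ commutes with $T_c \in \PMod_c(\Sigma_g)$, the standard conjugation identity gives
\[
T_{f(c)} \;=\; f\, T_c\, f^{-1} \;=\; T_c.
\]
Using the well-known fact that $T_a = T_b$ if and only if $a = b$ as isotopy classes, we conclude that $f(c) = c$ for every curve $c$ on $\Sigma_g$.

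Finally, the infinite-type Alexander method of Hern\'andez--Morales--Valdez \cite{HMV} (the same tool already used at the end of the proof of Proposition \ref{prop:autos_pmod}) asserts that a mapping class of $\Sigma_g$ fixing every isotopy class of essential simple closed curve must be the identity. Applying this to $f$ yields $f = \mathrm{id}$, completing the proof. The only nontrivial input is the infinite-type Alexander method, whose use is already established in this paper; the rest of the argument is the routine ``Dehn twist trick'' translated to the infinite-type setting.
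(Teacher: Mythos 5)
Your proof is correct and matches the paper's argument essentially verbatim: commute $f$ past the Dehn twist $T_c$ to get $T_{f(c)} = T_c$, deduce $f(c)=c$ for every curve, and invoke the infinite-type Alexander method of \cite{HMV} to conclude $f=\mathrm{id}$. (Your version is slightly more careful than the paper in spelling out why each $T_c$ lies in $\PMod_c(\Sigma_g)$, but the approach is identical.)
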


\begin{proof} 
Suppose $f\in \Mod(\Sigma_g)$ commutes with every element of $\PMod_c(\Sigma_g)$, and in particular with every Dehn twist. In particular, we have that 
\[T_a= fT_a f^{-1} = T_{f(a)},\] 
for every curve $a$ on $\Sigma_g$. But, as we mentioned in the proof of Proposition \ref{prop:autos_pmod}, the Alexander method of \cite{HMV} implies that every element of $\Mod(\Sigma_g)$ that fixes every curve on $\Sigma_g$ is the identity.
\end{proof}

We need the following definition before proving Theorem \ref{thm:autos}. Suppose $Z$ is an orientable surface with non-empty boundary $\partial Z$, where we assume that every connected component of $\partial Z$ has been parametrized by means of a map $\varphi:\partial Z\to S^1$, which is a homeomorphism on each component. 
A homeomorphism $F:Z\to Z$ respects the boundary parametrization if  
$\varphi\circ f|_{\partial Z}=\varphi$. 
The {\em parametrized mapping class group} $\Mod^*(Z)$ is the 
group of isotopy classes of self-homeomorphisms of $Z$ respecting the boundary parametrization, modulo 
isotopies which are the identity on the boundary. 
Observe that $\Mod(Z) < \Mod^*(Z)$ is a finite index normal subgroup and that, as a by-product of the definition, an element of $\Mod^*(Z)$ may induce a non-trivial permutation on the set of boundary components of $Z$.

\begin{proof}[Proof of Theorem \ref{thm:autos}]  We prove the result for $\Bc_g$, as the case of $\Bh_g$ is dealt with along similar lines.
We first show that the natural homomorphism $$N_{\Mod(\Sigma_g)}(\Bc_g) \to \Aut(\Bc_g)$$ is surjective. To this end, let
 $\phi: \Bc_g \to \Bc_g$ be an automorphism. By Corollary \ref{cor:charac}, $\phi$ induces an automorphism  $$\phi_c: \PMod_c(\Sigma_g) \to \PMod_c(\Sigma_g),$$ which  is the restriction of $\phi$ to $\PMod_c(\Sigma_g)$.  Proposition \ref{prop:autos_pmod} implies that there exists $\Phi\in \Mod(\Sigma_g)$ such that 
 $\phi_c$ is conjugation by $\Phi$, denoted    $Ad_{\Phi}$. 
 
 From the proof of Lemma \ref{lem:twists2twists} for any (compact) $P$-suited subsurface $X\subset \Sigma_g$ of genus $g$, there  exists a compact subsurface $Y=\Phi(X)\subset \Sigma_g$ such that the restriction  $\phi_c|_{\Mod(X)}$ sends  $\Mod(X)$ isomorphically onto $\Mod(Y)$. 
 
Observe that for any $P$-suited subsurface $X\subset \Sigma_g$ the group $\Mod^*(X)$ admits a canonical embedding into 
$\Bc_g$, by extending rigidly classes of homeomorphisms of $X$ to classes of homeomorphisms of $\Sigma_g$. We claim:

%
%
%
 \begin{lemma}
 For any $P$-suited  subsurface $X\subset \Sigma_g$ of genus $g$, the restriction 
 $\phi|_{\Mod^*(X)}$ sends  $\Mod^*(X) $ isomorphically onto  some copy of $\Mod^*(\Phi(X))$.
 Moreover,  $\phi|_{\Mod^*(X)}$ is conjugation by $\Phi$. 
 \label{lem:Ad}
 \end{lemma}
 \begin{proof}[Proof of the Lemma]
 As $\phi|_{\Mod(X)}$ is conjugation  by $\Phi$, we have 
 $\phi(T_c)=T_{\Phi(c)}$, when $c\in \partial X$ is a boundary circle. 
 The stabilizer of the set $\{T_c \mid c\in \partial X\}$ in $\Bc_g$ is the stabilizer of the simplex of $\mathcal C(\Sigma_g)$ 
 determined by elements of $\partial X$, with respect to the $\Bc_g$-action.   
 We call it the stabilizer of the multicurve. 
 
Let $Y$ be a compact subsurface of $\Sigma_g$ and denote by $S_{\partial Y}$ the permutation group on the set of boundary components of $Y$. Looking at the induced permutation on the set of boundary components, we obtain canonical homomorphisms: 
\[ \Mod^*(Y)\to S_{\partial Y}, \hspace{.5cm} \Mod^*(\Sigma_g \setminus \inte(Y))\to S_{\partial Y},\]
where $\inte(Y)$ denotes the interior of $Y$.  We denote by $\Mod^*(Y)\times _{S_{\partial Y}} \Mod^*(\Sigma_g \setminus \inte(Y))$ the fibred product of the two 
 homomorphisms above, making the following diagram commutative: 
 \[ \begin{array}{ccc}
 \Mod^*(Y)\times _{S_{\partial Y}} \Mod^*(\Sigma_g \setminus \inte(Y)) & \to & \Mod^*(\Sigma_g \setminus \inte(Y)) \\
 \downarrow & & \downarrow \\
 \Mod^*(Y) & \to &  S_{\partial Y} \\
 \end{array}
 \]
 Assume now that $Y$ has genus $g$. 
 Then the stabilizer of the multicurve $\partial Y$ in $\Mod(\Sigma_g)$ is 
 $\Mod^*(Y)\times _{S_{\partial Y}} \Mod^*(\Sigma_g \setminus \inte(Y))$. 
 
 Further, the stabilizer in $\Bc_g$ of the multicurve 
$\partial X$ should be sent by $\phi$ into the stabilizer 
 of the multicurve $\partial Y$, where $Y=\Phi(X)$. 
Since $\Mod^*(X)$ stabilizes $\partial X$ we derive that 
 $\phi(\Mod^*(X))\subset \Mod^*(Y)\times _{S_{\partial Y}} \Mod^*(\Sigma_g \setminus \inte(Y))$.

 If $\pi_1$ and $\pi_2$ denote, respectively, the projections onto the first and second factors of the fibred product then 
 we derive a map $\pi_1\circ \phi: \Mod^*(X)\to \Mod^*(Y)$, such that 
 $\pi_1\circ \phi|_{\Mod(X)}$ is the conjugation $Ad_{\Phi}$ by $\Phi$. 

We claim that  $\pi_1\circ \phi=Ad_{\Phi}$. By composing with $Ad_{\Phi}^{-1}$ 
this reduces to show that if a homomorphism 
$\varphi:\Mod^*(X)\to \Mod^*(X)$  restricts to identity on $\Mod(X)$ then $\varphi$ is the identity. If $h\in \Mod^*(X)$ sends a boundary component $a$ into $b$, then 
\[ T_b=\varphi(T_{h(a)})=\varphi(hT_ah^{-1})=\varphi(h)T_a\varphi(h)^{-1}=T_{\varphi(h)a}\]
 so that $\varphi(h)a=b$. Thus the homomorphism $S_{\partial X}\to S_{\partial X}$ 
induced by $\varphi$ is identity. Then the Five Lemma  
implies that $\varphi$ is injective and Lemma \ref{lem:restriction} gives us the desired result, as the centralizer of $\Mod(X)$ in $\Mod^*(X)$ is trivial, if the 
 complexity of the surface $X$ is large enough.

 Further, the image $\pi_2\circ\phi(\Mod(X))$ is trivial, since $\phi(\Mod(X))=\Mod(Y)$. 
 Therefore $\pi_2\circ \phi$ factors through the quotient $\Mod^*(X)/\Mod(X)$, namely $S_{\partial X}$, and the map 
 $S_{\partial X}\to \Mod^*(\Sigma_g \setminus \inte(Y))\to S_{\partial Y}$ is the isomorphism induced by 
 the conjugation $Ad_{\Phi}$. This implies that there exists a section 
 $S_{\partial Y}\to \Mod^*(\Sigma\setminus Y)$ which determines an embedding of 
 $\Mod^*(Y)$ into $\Mod(\Sigma_g)$ so that $\phi(\Mod^*(X))$ is the image of $\Mod^*(Y)$. 
 \end{proof}

Continuing with the proof, observe that an immediate consequence of the proof of Theorem \ref{thm:fg} is that $\Bc_g$ has a system of generators $\mathcal S$ consisting  uniquely of elements belonging to $\Mod^*(X_i)$ for finitely many 
 $P$-suited subsurfaces $X_i\subset \Sigma_g$.  
 Lemma \ref{lem:Ad} shows that the natural map $\phi_{\Mod^*(X)}:\Mod^*(X)\to \Mod(\Sigma_g)$ coincides with ${Ad_{\Phi}}|_{\Mod^*(X)}$. In particular, $\phi(s)=Ad_{\Phi}(s)$, for 
 any $s\in S$.  Since both $\phi$ and $Ad_{\Phi}$ are homomorphisms, we derive that $\phi(s)=Ad_{\Phi}(s)$ 
for any  $s\in \Bc_g$. It follows that $Ad_{\Phi}(\Bc_g)\subseteq \Bc_g$,  
 namely $\Phi\in N_{\Mod(\Sigma_g)}(\Bc_g)$, as claimed.

To see that the homomorphism $N_{\Mod(\Sigma_g)}\to \Aut(\Bc_g)$ is injective, suppose that $\Phi \in \Mod(\Sigma_g)$ induces the identity automorphism of $\Bc_g$. Then $\Phi$ also induces the identity automorphism of $\PMod_c(\Sigma_g)$.
 At this point, Corollary \ref{cor:charac} tells us that  $\PMod_c(\Sigma_g)$ is normal in $\Bc_g$, since it is characteristic. In addition, $\PMod_c(\Sigma_g)$ has trivial centralizer in $\Mod(\Sigma_g)$, and therefore in $\Bc_g$, by Lemma \ref{lem:centralizer}. Hence, Lemma \ref{lem:restriction} implies that $\Phi$ is the identity. 
\end{proof}

\subsection{A note on normalizers}  We now explore some  properties of the normalizer of $\Bc_g$ (resp. $\Bh_g$) in $\Mod(\Sigma_g)$; as we will see, these represent a certain departure from the case of finite-type mapping class groups. First, we have:

\begin{lemma}
 Let $G$ be a subgroup of $\Mod(\Sigma_g)$. Assume that   
 $G\cap \PMod(\Sigma_g)$ is a normal subgroup of $\Mod(\Sigma_g)$. 
Then 
there is a natural surjective homomorphism
 \[N_{\Mod(\Sigma_g)}(G) \to N_{\Homeo(C)}(G/G\cap \PMod(\Sigma_g)).\]
 \label{lem:rubin}
 \end{lemma}
\begin{proof}
As $G\cap \PMod(\Sigma_g)$ is a normal subgroup of $G$ we derive from the exact sequence $(\ref{eq:SESgen})$ that  $G/G\cap \PMod(\Sigma_g)$ is a subgroup of 
$\Homeo(C)$. Then there is a well-defined homomorphism  
\[N_{\Mod(\Sigma_g)}(G) \to N_{\Homeo(C)}(G/G\cap \PMod(\Sigma_g))\]
that sends every element $H$ of the normalizer to its restriction to the space $C$  
of ends of $\Sigma_g$. 
Let now $h\in N_{\Homeo(C)}(G/G\cap \PMod(\Sigma_g))$.  By the exact sequence 
(\ref{eq:sesB})  $h$ admits a lift $H\in \Mod(\Sigma_g)$. 
Conjugation by $H$ sends the subgroup $G\cap \PMod(\Sigma_g)$ into itself, because it  
was assumed to be a normal subgroup of $\Mod(\Sigma_g)$. 
It follows that $H\in N_{\Mod(\Sigma_g)}(G)$, so that the homomorphism in the statement is surjective. 
\end{proof}
\begin{remark}
By a deep theorem of 
Rubin (see \cite{Rubin}), if $G/G\cap \PMod(\Sigma_g)$ is sufficiently large, in particular if 
$G\supseteq \Bc_g$, then there is an isomorphism 
\[\Aut(G/G\cap \PMod(\Sigma_g))\simeq N_{\Homeo(C)}(G/G\cap \PMod(\Sigma_g)).\]
\end{remark}\label{rem:rubin}
A recent result \cite{BCMNO} shows that the outer automorphism group $\Out(V)$ of $V$ is infinite. 
Combining this with the lemma above, we obtain: 
\begin{corollary}
The homomorphisms $\Out(\Bc_g)\to \Out(V)$ and $\Out(\Bh_g)\to \Out(V)$ are surjective. In particular, 
$\Out(\Bc_g)$ and $\Out(\Bh_g)$ are infinite. 
\label{cor:out2} 
\end{corollary}
\begin{proof}
From the exact sequences (\ref{eq:sesB}) and (\ref{eq:sesH}) the 
and Corollary \ref{cor:charac} $\Bc_g\cap \PMod(\Sigma_g)=\Bh_g\cap \PMod(\Sigma_g)=\PMod_c(\Sigma_g)$. 
Then Lemma \ref{lem:pmod_normal}  shows that the assumptions of Lemma \ref{lem:rubin} are satisfied. 
In particular we have surjective homomorphisms $N_{\Mod(\Sigma_g)}(\Bc_g)\to N_{\Homeo(C)}(V)$ 
and $N_{\Mod(\Sigma_g)}(\Bh_g)\to N_{\Homeo(C)}(V)$. By Rubin's Theorem  $N_{\Homeo(C)}(V)\simeq \Out(V)$. 
On the other hand   $N_{\Mod(\Sigma_g)}(\Bc_g)\subset \Out(\Bc_g)$ and 
 $N_{\Mod(\Sigma_g)}(\Bh_g)\subset \Out(\Bh_g)$. 
\end{proof}

We remark that a theorem of Ivanov \cite{Ivanov} mentioned above asserts that $\Out(\Mod(S_{g,n}))$ is always a finite group (in fact, trivial in all but finitely many cases). Thus Corollary \ref{cor:out} represents a limitation in the dictionary between asymptotic and finite-type mapping class groups.

Finally, we prove that the first homomorphism of Corollary \ref{cor:out} has infinite kernel: 

\begin{lemma}
For every $g\ge 0$, the homomorphism $\Out(\Bc_g)\to \Out(V)$ has infinite kernel. 
\end{lemma}\label{lem:infkernel}
\begin{proof} As usual, we prove the result for $\Bc_g$ only. 
Consider first the surface $\Sigma_0^1$, which recall that is homeomorphic to a sphere minus the union of a Cantor set and an isolated puncture. Using a totally analogous argument to that of Lemma \ref{lem:rubin}, we deduce that there is a surjective homomorphism 
\begin{equation}
\Out(\Bc_0^1)\to \Out(V)
\label{eq:ohmy}
\end{equation}
Recall the construction of the element $t_{\infty}$ from Example \ref{example}. 
Let $P$ be the pants decomposition underlying the rigid structure on $\Sigma_0^1$, and  $t_{\infty} \in \Mod(\Sigma_0^1)$ obtained as the product of all (say left) half-twists about the curves of $P$. 

We claim that $t_{\infty}\in N_{\Mod(\Sigma_0^1)}(\Bc_0^1)$. To see this, observe that if $Z$ is a defining surface  of an element $\varphi\in \Bc_g$, then each connected component of $\Sigma_0^1-Z$ is preserved by $t_{\infty}\varphi t_{\infty}^{-1}$; moreover, this element acts as the identity on every such component. In particular, $t_{\infty}\varphi t_{\infty}^{-1}\in \Bc_0^1$ and thus $t_{\infty}\in N_{\Mod(\Sigma_0^1)}(\Bc_0^1)$. Furthermore, the short exact sequence \eqref{eq:sesB} implies that $t_{\infty}^d\not\in \Bc_0^1$ unless $d=0$, since  $t_{\infty}^d$ fixes every end of $\Sigma_0^1$ but  does not have compact support unless $d=0$. Hence it provides an example of an infinite order element in 
$\Out(\Bc_0^1)$. On the other hand, its image under the homomorphism \eqref{eq:ohmy} is obviously trivial, and thus this homomorphism has infinite kernel.

After all this discussion, we treat the case of $\Sigma_g$. To do so, we simply embed $\Sigma_0^1$ in $\Sigma_g$ in such way that the pants decomposition $P$ underlying the rigid structure of $\Sigma_0^1$ is sent to the pants decomposition underlying the rigid structure of $\Sigma_g$. Using the same arguments as above, we deduce that the image of $t_\infty$ under the homomorphism induced by the embedding $\Sigma_0^1 \to \Sigma_g$ produces an infinite-order element in the kernel of the homomorphism $\Out(\Bc_g)\to \Out(V)$.
\end{proof}
%
%
%

\section{Surjections between asymptotic mapping class groups}
\label{sec:injective}

In this section we will prove Theorem \ref{thm:Ainj}. 

\begin{proof}[Proof of Theorem \ref{thm:Ainj}]
Let $0\le g<h$ and $h\geq 2$. Seeking a contradiction, suppose there were a surjective 
homomorphism  $\phi: \Bc_h \to \Bc_g$. 

For any $f\in \PMod_c(\Sigma_h)$ there exists, by Lemma \ref{lem:limit}, 
some $P$-suited subsurface $S_{h,n}\subset \Sigma_h$ such that $f$ is in the image of 
$\PMod(S_{h,n})$ within $\PMod_c(\Sigma_h)$. 
Let $a\in \phi(\PMod_c(\Sigma_h))$, so that $a=\phi(f)$, for some $f\in \PMod(S_{h,n})$. 
For any finite set $\{a_i\}\subset \Bc_g$ we can write $a_i=\phi(f_i)$, with 
$f_i\in \Bc_h$.  According to Lemma \ref{lemma-kapou}, 
there exists $f'\in \Bc_h$ such that $f$ belongs to 
the normal subgroup of $\Bc_h$ generated by $f'$ and such that $[f',f_i ff_{i}^{-1}]=1$.
This implies that $a$ belongs to the normal subgroup of $\Bc_g$ generated by 
$b=\phi(f')$ and $[b,a_i a a_i^{-1}]=1$. Lemma \ref{lemma-kapou} again implies that 
$a\in \PMod_c(\Sigma_g)$.

Restricting $\phi$ we obtain a homomorphism $\PMod(S_{h,n}) \to \PMod_c(\Sigma_g)$. Since $\PMod(S_{h,n})$ is finitely presented and $\PMod_c(\Sigma_g)$ is a direct limit of finitely presented groups, by Lemma \ref{lem:limit}, we deduce that there is $m\in \N$ and a nontrivial map $\PMod(S_{h,n}) \to \PMod(S_{g,m})$. But every such map has either 
a quotient of $\Z/10\Z$ as image, when $h=2$, or  trivial image, when $h\geq 3$, by \cite[Prop. 7.1]{AS} or \cite[Thm.7]{Castel}.

Let now $h\geq 3$. By the above $\phi(f)=1$, for any $f\in \PMod(\Sigma_h)$, and therefore  
$\phi$ factors through $V$. Let $\phi':V\to \Bc_g$ be the induced surjective homomorphism.  
If we compose the projection $p:\Bc_g\to V$ with $\phi'$  we obtain then 
a surjective homomorphism $V\to V$ with nontrivial kernel. Since $V$ is simple 
such a homomorphism should be trivial. This contradicts the surjectivity.

Assume that $h=2$. The nested groups $\phi(\PMod(S_{2,n}))$ are quotients of $\Z/10\Z$ for 
every $n$, in particular the sequence is eventually constant and $\phi(\PMod_c(\Sigma_{2}))$ 
is a quotient of $\Z/10\Z$. 
Therefore $\phi$ factors through the group $Q=\Bc_2/[\PMod_c(\Sigma_2), \PMod_c(\Sigma_2)]$, yielding a surjective homomorphism $\phi':Q\to \Bc_g$.
Observe that the restriction of the sequence \eqref{eq:SESgen} to $Q$ reads 
\[1\to  \Z/10\Z=H_1(\PMod_c(\Sigma_2))\to Q\to V\to 1.\]

Now, the normal subgroup $K=\phi'^{-1}(\PMod_c(\Sigma_g))\lhd Q$ is infinite 
because it is the inverse image of an infinite subgroup
by a surjective homomorphism. Therefore $K$ cannot be contained in the kernel of the projection homomorphism  $\overline{p}: Q\to V$, which is finite. Since $p(K)$ is a normal subgroup of $V$ and $V$ is simple,  
$\overline{p}(K)= V$. As $\ker \overline{p}$ is finite, we derive that $Q/K$ must be finite.
On the other hand, $K$ has infinite index in $Q$, because 
it is the preimage of the normal subgroup $\PMod_c(\Sigma_g)$ of $\Bc_g$ by the surjective 
homomorphism $\phi'$. 
Hence we get  the desired contradiction. 
\end{proof}

\begin{remark}
The same reasoning as above along with Remark \ref{H-kapou} 
implies that there are no surjective homomorphisms $\Bh_h \to \Bh_g$ (resp. $\Bh_h \to \Bc_g$
and $\Bc_h \to \Bh_g$) if $g<h$.
 \end{remark}


\section{Homomorphisms from lattices and Property (T)}
\label{sec:lattices}
We start this section by proving Theorem \ref{thm:T}, which is a direct consequence of the existence of the short exact sequence \eqref{eq:sesB}.

Before we prove the result, recall that a discrete group $G$ is said to have {\em Kazhdan's  Property (T)} if every action of $G$ by continuous affine isometries on a real Hilbert space has a fixed point. On the other hand,  $G$ has the {\em Haagerup property} if it admits a proper action by continuous affine isometries on a Hilbert space. It follows that if $G$ has both properties, then $G$ is finite. We refer the reader to the book \cite{BHV} for a thorough exposition of these properties. 
We may now prove Theorem \ref{thm:T}, whose statement we now recall: 

\begin{named}{Theorem \ref{thm:T}}
Let $g\ge 0$. Then $\Bc_g$ and $\Bh_g$ do not have Property (T). 

\end{named}

\begin{proof}[Proof of Theorem \ref{thm:T}]
Recall from equation \eqref{eq:sesB} that there is a surjective homomorphism $\Bc_g \to V$. It is known (see, for instance, \cite[Theorem 1.7.1]{BHV}) that Property (T) is preserved under quotients. On the other hand, a result of Farley \cite{Far} asserts that $V$ has the Haagerup property. Since $V$ is infinite,  it follows that $V$ does not have Property (T), and thus neither does $\Bc_g$.  The same argument yields the result for $\Bh_g$. 
\end{proof}

We now proceed to give a proof of Theorem \ref{thm:lattice}. The proof is again based on properties of the group $V$, plus 
a result of Farb-Masur \cite{FM} which asserts that every homomorphic image of a lattice in a finite-type mapping class group is finite. More concretely, they proved:

\begin{theorem}[\cite{FM}]
Let $\Gamma$ be an irreducible lattice in a semisimple Lie group of real rank at least two. For every $g, n\ge 0$, every homomorphism $\Gamma \to \Mod(S_{g,n})$ has finite image. 
\label{thm:FM}
\end{theorem}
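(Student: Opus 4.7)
The plan is to combine Margulis' normal subgroup theorem with the dynamics of the $\Gamma$-action on the Thurston boundary of projective measured foliations, exploiting Kaimanovich--Masur's identification of the Poisson boundary of $\Mod(S_{g,n})$.

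First, I would reduce to a virtually injective situation. Margulis' normal subgroup theorem states that any normal subgroup of an irreducible lattice $\Gamma$ in a semisimple Lie group of real rank at least two is either contained in the (finite) center or of finite index. Applying this to the kernel of $\rho\colon\Gamma\to\Mod(S_{g,n})$ gives two possibilities. If $\ker(\rho)$ is of finite index, the image is finite and we are done. Otherwise $\ker(\rho)$ is finite, and by passing to a torsion-free finite-index subgroup via Selberg's lemma I may assume $\rho$ is injective with infinite image, and aim to derive a contradiction.

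Next, I would bring in the Thurston boundary $\mathcal{PMF}=\mathcal{PMF}(S_{g,n})$, a compact metrizable space on which $\Mod(S_{g,n})$ acts continuously. Fixing a symmetric finitely supported admissible probability measure $\mu$ on $\Gamma$, the push-forward by $\rho$ is a measure on $\Mod(S_{g,n})$; by a theorem of Kaimanovich--Masur, for any non-elementary such random walk there exists a unique $\mu$-stationary measure $\nu$ on $\mathcal{PMF}$, and $(\mathcal{PMF},\nu)$ is a $\mu$-boundary. I would then invoke Margulis' theory of boundary maps: the Furstenberg--Poisson boundary $(G/P,m)$ of the ambient Lie group $G$ is a maximal $\Gamma$-boundary, so there is a $\Gamma$-equivariant measurable map
\[
\phi\colon (G/P,m)\longrightarrow (\mathcal{PMF},\nu).
\]

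Finally, I would analyze $\phi$ using the structure theory of stabilizers in $\mathcal{PMF}$. By Masur--Thurston, the stabilizer of any point of $\mathcal{PMF}$ is either reducible (virtually fixing a multicurve) or virtually cyclic generated by a pseudo-Anosov, and the stabilizer of a pair of distinct minimal foliations is virtually cyclic. Combined with the double ergodicity of $\Gamma$ on $G/P\times G/P$ furnished by Kaimanovich's equivariant ergodicity theorem, this forces $\phi$ to be essentially constant, or to take values in a very thin invariant set whose setwise stabilizer is virtually abelian of bounded rank. Either conclusion shows that $\rho(\Gamma)$ virtually fixes a multicurve or a pair of foliations, so $\rho(\Gamma)$ is virtually contained in a reducible or virtually cyclic subgroup; induction on the complexity of $S_{g,n}$ (cutting along the fixed multicurve) then reduces the question to groups with no infinite higher-rank images, finishing the argument.

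The main obstacle will be the last step: verifying that the existence of the equivariant boundary map $\phi$ genuinely forces $\rho(\Gamma)$ into a virtually abelian or reducible subgroup. This requires delicate use of double ergodicity together with the geometry of $\mathcal{PMF}$, and in particular ruling out that $\phi$ disperses mass over a measurable family of uniquely ergodic foliations carrying no large equivariant structure; this is where the rank-at-least-two hypothesis on $G$ is essential, as it provides enough independent $\Gamma$-boundary factors to collapse the image of $\phi$.
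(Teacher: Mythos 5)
This statement is imported by the paper from Farb--Masur's \emph{Superrigidity and mapping class groups} and is not proved in the text, so there is no internal argument to compare against; what can be assessed is whether your outline would actually close. It would not, for two concrete reasons.

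First, you invoke Kaimanovich--Masur before you are entitled to. Their theorem (uniqueness of the stationary measure $\nu$ on $\mathcal{PMF}$, concentration on uniquely ergodic foliations, and the boundary property) holds only for \emph{sufficiently large} (non-elementary) subgroups of $\Mod(S_{g,n})$. After your Margulis/Selberg reduction you only know that $\rho$ is injective with infinite image; the image could still be infinite and reducible, or infinite and virtually cyclic. The reduction to the sufficiently large case has to come \emph{first}: rule out virtually cyclic image because $\Gamma$ has finite abelianization (normal subgroup theorem or Property (T)), and handle reducible image by passing to the canonical reduction system, cutting, and inducting on complexity, keeping track of finite-index and finite-kernel losses via the normal subgroup theorem again. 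You place this induction at the end, after the boundary argument, where it cannot do this job. Relatedly, your stabilizer statement is off: the stabilizer of an arbitrary point of $\mathcal{PMF}$ can be enormous (the stabilizer of a simple closed curve contains the full mapping class group of its complement); it is only for \emph{uniquely ergodic} foliations, which carry $\nu$, that stabilizers are virtually cyclic.

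Second, and more seriously, the step you yourself flag as the ``main obstacle'' is the entire content of the theorem, and the mechanism you sketch for it is wrong. Double ergodicity cannot force the boundary map $\phi\colon G/P\to\mathcal{PMF}$ to be essentially constant: for a sufficiently large image the stationary measure $\nu$ is non-atomic, so $\phi$ is genuinely non-constant, and an essentially constant $\phi$ would only produce a $\Gamma$-fixed point in $\mathcal{PMF}$, which does not by itself yield finiteness. The actual contradiction in Farb--Masur runs through Margulis' factor theorem --- every $\mu$-boundary of a higher-rank irreducible lattice is $\Gamma$-equivariantly isomorphic to $G/Q$ for a parabolic $Q$ --- followed by a comparison of the dynamics of $\Gamma$ on $G/Q$ with the contracting dynamics on $(\mathcal{PMF},\nu)$ supported on uniquely ergodic foliations with virtually cyclic stabilizers. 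None of that comparison is carried out in your proposal, so as written this is a plausible road map with the destination missing rather than a proof.
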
 

Before giving a proof of Theorem \ref{thm:lattice}, we remind the reader of its statement:

\begin{named}{Theorem \ref{thm:lattice}}
Let $\Gamma$ be a lattice in a  semisimple Lie group $G$ of real rank at least two, where $G$ has no compact factors isogenous to ${\rm SU}(1,n)$ or ${\rm SO}(1,n)$. 
Then, any homomorphism from $\Gamma$ to $\Bc_g $ (resp. $\Bh_g$) has finite image. 
\end{named}

\begin{proof}
Let $G$ be a Lie group as in the statement. As such, $G$ has Property (T) (see \cite[Theorem 13.2.4]{Witte}, for instance), and therefore so does $\Gamma$ (see \cite[Theorem 1.7.1]{BHV}). For such a lattice $\Gamma$, let $\phi: \Gamma \to \Bc_g$ be a homomorphism. Recall again the short exact sequence 
 \eqref{eq:sesB}: \[ 1\to \PMod_c(\Sigma_g)\to \Bc_g\stackrel{p}{\to} V\to 1.\] 
 As in the proof of Theorem \ref{thm:T}, the fact that $V$ has the Haagerup property implies that $(p \circ \phi)(\Gamma)$ is finite. But $\phi(\Gamma) \cap \ker(p)$ is also finite. To see this, observe that since $\Gamma$ is finitely presented (see \cite[Theorem 4.7.10]{Witte}), Lemma \ref{lem:limit} implies that there exists a finite-type subsurface $S$ of $\Sigma_g$ such that $\phi(\Gamma) \cap \ker(p)$ is contained in $\PMod(S)$. At this point, Theorem \ref{thm:FM} tells us that $\phi(\Gamma) \cap \ker(p)$ is also finite, as desired. 
 
 We obtain the result for $\Bh_g$ in an analogous way. 
 \end{proof}


\section{Non-linearity}

In this section we will prove Theorem \ref{thm:nonlinear}. As mentioned in the introduction, we will do so by showing that $\Bc_g$ contains a copy of {\em Thompson's group $F$}. Recall that $F$ is the group of piecewise-linear self-homeomorphisms of $[0,1]$ that preserve rational dyadic numbers, are differentiable except at finitely many dyadic rationals, and at each interval of differentiability the slopes are  powers of 2. 

A strongly related group which we will also need is {\em Thompson's group $T$}, whose definition is the same as that of $F$ but with the unit circle $\mathbb{S}^1$ instead of $[0,1]$. One has the well-known inclusions $F\subset T \subset V$; for a proof, as well as a detailed discussion on Thompson's groups, see \cite{CFP}. We now prove: 

\begin{proposition}\label{split}
Let $g\ge 0$. The short exact sequence \eqref{eq:sesB}, that is \[1 \to \PMod_c(\Sigma_g) \to  \Bc_g \to V \to 1,\] splits over  Thompson's group $F$.
\end{proposition} 
\begin{proof} 
The result is known for $g=0$ (see \cite{KS,FK2}), where in addition it is shown that the sequence \eqref{eq:sesB} splits over  Thompson group's $T$, identified with the subgroup of $\Bc_0$ preserving the whole visible side of $\Sigma_0$.

Consider now a closed disk $\Sigma_{0,1}$ with a Cantor set removed from its interior. The surface $\Sigma_{0,1}$ comes equipped with an obvious rigid structure that comes from  that of $\Sigma_0$ under the natural subsurface embedding $\Sigma_{0,1}\hookrightarrow\Sigma_0$. 
The group of asymptotically rigid mapping classes of $\Sigma_{0,1}$ is the pure braided Thompson group 
$BV$, a subgroup of the braided Thompson group considered by Brin \cite{Brin} and Dehornoy (\cite{Deh}). 
When $g>0$ 
the map $\Mod(\Sigma_{0,1})\to \Mod(\Sigma_g)$ is injective. As a consequence $BV$ injects into 
$\Bc_g$. On the other hand it is known that $BV$ splits over $F$ (see \cite{Brin,FK}). 
\end{proof}

Finally, we prove Theorem \ref{thm:nonlinear}:

\begin{proof}[Proof of Theorem \ref{thm:nonlinear}]
 By Proposition \ref{split}, $B_g$ contains an isomorphic copy of Thompson's group $F$. Now, $F$ has simple commutator subgroup \cite{CFP}, and in particular is not residually finite. Since finitely-generated subgroups of linear groups are residually finite and $F$ is finitely generated, we deduce that $\Bc_g$ (and thus $\Bh_g$) is not linear. 
\end{proof}

\bigskip

 \end{document}